\newtheorem{theorem}{Theorem}[section]
\newtheorem{lemma}[theorem]{Lemma}
\newtheorem{proposition}[theorem]{Proposition}
\newtheorem{corollary}[theorem]{Corollary}
\theoremstyle{definition}
\newtheorem{definition}[theorem]{Definition}
\theoremstyle{remark}
\newtheorem{remark}[theorem]{Remark}
\newtheorem{example}[theorem]{Example}
\begin{document} 

\title[Finite type invariants for cyclic equivalence classes of nanophrases]{Finite type invariants for cyclic equivalence classes of nanophrases}
\author[Yuka Kotorii]{Yuka Kotorii}

\address{
Department of Mathematics \\
Tokyo Institute of Technology \\
Oh-okayama \\
Meguro \\
Tokyo 152-8551 \\
Japan
}

\email{kotorii.y.aa@m.titech.ac.jp}

\subjclass[2000]{Primary 57M99; Secondary 68R15}

\keywords{nanowords, nanophrases, finite type invariants, immersed curves, Arnold's invariants}

\thanks{}

\begin{abstract} 

In this paper, we define finite type invariants for cyclic equivalence classes of nanophrases
and construct the universal ones.
Also, we identify the universal finite type invariant of degree 1 essentially with the linking matrix.
It is known that extended Arnold's basic invariants to signed words are finite type invariants of degree 2, by Fujiwara.
We give another proof of this result and show that those invariants do not provide the universal one of degree 2.

\end{abstract} 

\date{\today}

\maketitle

\section{Introduction}
Turaev developed the theory of words based on the analogy with curves on the plane, knots in the 3-sphere, virtual knots, etc. in \cite{MR2276346,MR2352565}.  
A word is a sequence of letters, belonging to a given set, called alphabet.
Let $\alpha$ be a set.
A nanoword over $\alpha$ is a pair of a word in which each letter appears exactly twice 
and a map from the set of letters appearing in the word to $\alpha$, defined by \cite{MR2352565}.
A nanophrase is a generalization of nanoword, defined by \cite{MR2276346}.
   
Vassiliev developed in \cite{MR1089670} the theory of finite type invariants of knots, which is conjectured to classify knots.
In \cite{MR2573961}, Ito defined a notion of finite type invariants for curves on surfaces, constructed a large family of finite type invariants, called $SCI_m$, 
and showed that they become a complete invariant for stably homeomorphism classes.
On the other hand, in \cite{label8048} Fujiwara provided a simple idea to define finite type invariants for cyclic equivalence classes of signed words 
by introducing a new type of crossing, 
called singular crossing, which plays intermediate role between an actual and virtual crossing.
Here the signed word is a nanoword over $\alpha=\{ +,-\}$ and it is known in \cite{MR2276346} that the set of cyclic equivalence classes of signed words bijectively corresponds to the set of stably homeomorphism classes of curves on surfaces.
We extend Fujiwara's finite type invariants,
to those for cyclic equivalence classes of nanophrases over a general $\alpha$ not necessarily to be $\{ +, - \}$. 
The first purpose of this paper is to construct the universal ones in Theorem \ref{f.thm}, by following the approach in \cite{MR1763963}.
In addition, we identify the universal finite type invariant of degree 1 essentially with the linking matrix.
As a related work, we should mention that Gibson and Ito extended the universal finite type invariant for nanophrases under different equivalence relations, called homotopy and closed homotopy, in \cite{MR2786675}.

To see the second purpose, recall that in \cite{MR1650406}, Polyak reconstructed Arnold's basic invariants \cite{MR1310595, MR1286249} for isotopy classes of generic curves on the plane or sphere 
by using Gauss diagrams and showed that those invariants are finite type invariants of degree 1 in his sense.
In \cite{MR2573961}, Ito connected Arnold's basic invariants of planar curves with his finite type invariants.
On the other hand, in \cite{label8048} Fujiwara extended Arnold's basic invariants of spherical curves to signed words 
and showed that those are finite type invariants of degree 2 in his sense.
We give another proof of Fujiwara's result and show that they do not provide the universal invariant of degree 2 in Fujiwara's sense.

This paper is organized as follows.
In section 2, following Turaev, we give formal definitions of words, phrases and so on.
In section 3, following Fujiwara, we give definitions of singular crossings and singular letters. 
In section 4, we define finite type invariants for cyclic equivalence classes of nanophrases.
In section 5, we construct the universal finite type invariants for cyclic equivalence classes of nanophrases.       
In section 6, we identify the universal finite type invariant of degree 1 essentially with the linking matrix.
In section 7, we restrict our discussion to nanowords corresponding to spherical curves,
and clarify the relation between Arnold's basic invariants and our finite type invariants. 
\section{Nanowords and nanophrases}

In this section, following Turaev \cite{MR2276346,MR2352565}, 
we review formal definitions of words, phrases and so on. 

\subsection{Words and phrases}
An {\it alphabet} is a finite set and its element is called a {\it letter}.  
A {\it word} of length $m$ is a finite sequence of $m$ letters.
The unique word of length 0 is called the {\it trivial word} and is written by $\emptyset$. 
An $n$-component {\it phrase} is a sequence of $n$ words, each of which we call a {\it component}.  
We write a phrase as a sequence of words, separated by `$|$'.  
The unique $n$-components phrase for which every component is the trivial word is called the {\it trivial $n$-component phrase} and is denoted by $\emptyset_n$.  
In this paper, we will regard words as 1-component phrases.

\subsection{Nanowords and nanophrases}
Let $\alpha$ be a finite set.  
An $\alpha$-alphabet is an alphabet $\mathcal{A}$ together with an associated map from $\mathcal{A}$ to $\alpha$.  
This map is called a {\it projection}.  
The image of any $A \in \mathcal{A}$ in $\alpha$ will be denoted by $|A|$.
An {\it isomorphism} $f$ of an $\alpha$-alphabet $\mathcal{A}_1$ to $\mathcal{A}_2$ is a bijection 
such that $|f(A)|$ is equal to $|A|$ for any letter $A$ in $\mathcal{A}_1$.
A {\it Gauss word}  on an alphabet $\mathcal{A}$ is a word on $\mathcal{A}$ such that every letter 
in $\mathcal{A}$ appears exactly twice.  
Similarly, a {\it Gauss phrase} on $\mathcal{A}$ is a 
phrase which satisfies the same condition.  
By definition, a 1-component Gauss phrase is a Gauss word. 

Let $p$ be a Gauss word or a Gauss phrase.  
The {\it rank} of $p$ is the number of distinct letters appearing in $p$.  
We denote it by ${\rm rank}(p)$. 
Note that the rank of a Gauss word must be a half of its length.
For example, the rank of $ABCBAC$ is $3$ and the rank of $A|B|\emptyset |BA$ is $2$.

An $n$-component {\it nanophrase} over $\alpha$ is a pair $(\mathcal{A}, p)$ 
where $\mathcal{A}$ is an $\alpha$-alphabet and $p$ is an $n$-component Gauss phrase on $\mathcal{A}$.
When $n$ is equal to 1, we call it a {\it nanoword}.  

Let $(\mathcal{A}_1, p_1)$ and $(\mathcal{A}_2, p_2)$ be nanophrases over $\alpha$.
An {\it isomorphism} $f$ of a nanophrase $(\mathcal{A}_1, p_1)$ to $(\mathcal{A}_2, p_2)$ is an isomorphism $f$ of $\alpha$-alphabets such that $f$ applied letterwisely to the $i$th component of $p_1$ gives the $i$th component of $p_2$ for all $i$.
If such an isomorphism exists, we say that $(\mathcal{A}_1, p_1)$ and $(\mathcal{A}_2, p_2)$ are isomorphic.

We can define the rank of a nanoword and a nanophrase similar to that of a Gauss word and a Gauss phrase.


\subsection{Shift move on nanowords and nanophrases}
In \cite{MR2276346}, Turaev defined a {\it shift move} on nanophrases.
Let $\nu $ be an involution on $\alpha $.
Suppose $p$ is an $n$-component nanophrase over $\alpha $.
A  {\it $\nu $-shift move} on the $i$th component of $p$ is a move which gives a new nanophrase $p' $ as follows.
If the $i$th component of $p$ is empty or only a single letter, then $p'$ is $p$.
If not, we can write the form of the $i$th component of $p$ by $Ax$.
Then the $i$th component of $p'$ is $xA$ and other components of $p'$ are the same as the corresponding components of $p$.
Furthermore, if we write $|A|_p$ for $|A|$ in $p$ and $|A|_{p'}$ for $|A|$ in $p'$, then $|A|_{p'}$ equals $\nu(|A|_{p})$ 
when $x$ contains the letter $A$ and otherwise, $|A|_{p'}$ equals $|A|_p$.


\section{Singular crossings and singular letters on signed words}

In \cite{label8048}, Fujiwara introduced singular letters on signed words, which are nanowords over $\alpha=\{ +,- \}$.
In this section, following Fujiwara, we review definitions of singular crossings and singular letters on signed words.

\subsection{Curves and signed words}

In this paper, a curve means a generic immersion from an oriented circle to a closed oriented surface.
Here generic means that the curve has only a finite set of self crossings which are all transversally double points, 
does not have triple points and self tangencies, and has a regular neighborhood.
A pointed curve is a curve endowed with a base point away from the double points.

Let $\alpha $ be the set $\{+,- \}$ and $\mathcal{A}$ an $\alpha$-alphabet.
We then call a nanoword $(\mathcal{A},w)$ over $\alpha$ a signed word.

We consider a pointed curve on a surface.  
We label these crossings in an arbitrary way by different letters $A_1,A_2,...,A_m$, 
where $m$ is the number of crossings.
The Gauss word of a curve is obtained by the following.
We start from the base point, move along the curve following the orientation and finish when we get back to the base point.
Then we write down all the letters of the crossings in the order we meet them.
The resulting word $w$ on the alphabet $\mathcal{A}=\{ A_1,A_2,...,A_m\}$ contains each letter twice.
Making $w$ a signed word, we consider the crossing of the curve labeled by $A$.
When moving along the curve as above, if we first traverse this crossing from the bottom-left to the top-right,
then $|A|=-$, otherwise $|A|=+$, see Figure \ref{fig:cross}.
Here the orientation of the ambient surface is counterclockwise.
The trivial curve is corresponding to the signed word $\emptyset$.
If we choose a different choice of the labeling of the crossing, we get an isomorphic signed word. 
We assign to each curve on a surface the isomorphism class of this signed word.

Two curves are stably homeomorphic 
if there is a homeomorphism between their regular neighborhoods 
which maps one curve onto the other preserving the orientation of the curves and the ambient surfaces.  
Similarly, two pointed curves are pointed stably homeomorphic 
if two curves are stably homeomorphic mapping one base point onto the other.

If we change the curve by a stably homeomorphism, then the associated signed word does not change, since it is defined by the behavior of the curve in its regular neighborhood.
It is proved in \cite{MR2276346} that the set of isomorphism classes of signed words bijectively corresponds to the set of stably homeomorphism classes of pointed curves on surfaces.

Let $\nu$ be an involution on $\alpha$ which maps $+$ to $-$.
The {\it cyclic equivalence relation} on signed words is defined by the relation generated by the $\nu$-shift move.
It is proved in \cite{MR2276346} that the set of cyclic equivalence classes of signed words bijectively corresponds to the set of stably homeomorphism classes of curves on surfaces.

\begin{figure}[h]
\begin{center}
\includegraphics[scale=0.7]{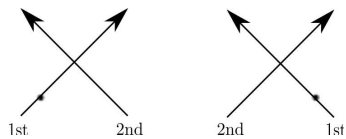}
\caption{On the left the sign is $-$, and on the right $+$}
\label{fig:cross}
\end{center}
\end{figure}
\subsection{Singular curves and singular signed words}

When we project a curve on a surface to the plane,
Some crossings which are not crossings on a surface may appear in the planar curve.
We call such a crossing a virtual crossing and denote it by a crossing with a small circle surrounding it. 
Such crossings do not contribute at all to the associated signed word.
Virtual crossing is not just an ordinary graphical vertex,
but an non-actual crossing.

Let $c$ be a curve on a surface, and $w_c$ a signed word corresponding to $c$.
If we replace some crossings of the planar curve which come from actual crossings of $c$ by virtual crossings,
we get a new curve on some surface. 
The signed word corresponding to the new curve is obtained from $c$ by deleting letters assigned to the actual crossing in question.

Fujiwara \cite{label8048} introduced a new type of crossing called {\it singular} crossing, 
which is an intermediate crossing between an actual crossing and a virtual crossing, 
and denoted it as a crossing with a small box surrounding it.  
A singular curve is a planar curve which may contain singular crossings.

We call a signed word corresponding with a singular curve a singular signed word.
Getting signed words corresponding with singular curves, 
we label actual and singular crossings by the letters such as $A$ and the letters with asterisk such as $A^*$, respectively.
The letters such as ${A}^*$ are called {\it singular} letters.
For a word $w$ without singular letters, 
denote by $w^*$ the word obtained by changing all letters of $w$ to singular letters saving signs.

\begin{figure}[h]
\begin{center}
\includegraphics[scale=0.3]{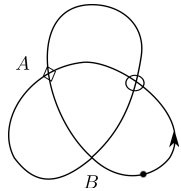}
\caption{The singular curve corresponding to the signed word ${A}^*B{A}^*B$, where $|{A}^*|=+$ and $|B|=-$}
\label{fig:singular}
\end{center}
\end{figure}
\bigskip

\section{Definitions of finite type invariants}

In \cite{label8048}, Fujiwara defined finite type invariants for cyclic equivalence classes of signed words.
We now extend the definition of finite type invariants for general $\alpha $.

Let $\alpha $ be any finite set.
Let $P(\alpha,n)$ denote the set of isomorphism classes of $n$-component nanophrases over $\alpha $. 
Let $\nu $ be any involution on $\alpha$.
We define the {\it cyclic equivalence relation} over $n$-component nanophrases as equivalence relations generated by isomorphisms and $\nu$-shift moves.
Let $P(\alpha ,\nu ,n)$ denote the set of cyclic equivalence classes of $n$-component nanophrases over $\alpha $.

We define $\alpha^*= \{a^* \mid a \in \alpha \}$ and let $\mathcal{A}$ be an $\alpha\cup \alpha^*$-alphabet.
Then the letter whose projection is contained in $\alpha^\ast $ is called a {\it singular letter} and is denoted with asterisk, say by $A^\ast $, for easy distinction.
Let $P_m(\alpha ,n)$ denote the set of isomorphism classes of $n$-component nanophrases over $\alpha \cup \alpha ^*$ with $m$ singular letters.
We call the phrase with singular letters a {\it singular phrase}.
By definition, $P_0(\alpha ,n)$ is the set of non-singular $n$-component nanophrases and so $P_0(\alpha ,n)=P(\alpha,n)$.

Given an involution $\nu $ over $\alpha $, we extend $\nu $ to $\alpha \cup \alpha ^*$ as follows.
For any $a\in \alpha $, we define 
\[\nu (a^\ast )=\nu (a)^\ast. \]  
Then let $P_m(\alpha ,\nu ,n)$ denote the set of cyclic equivalence classes of $n$-component nanophrases over $\alpha \cup \alpha ^*$ with $m$ singular letters, where $m$ is at least 0.
By definition, $P(\alpha ,\nu ,n) = P_0(\alpha ,\nu ,n)$.
Put $\mathcal{P}(\alpha ,\nu ,n)=\cup_{m\geq 0}P_m(\alpha ,\nu ,n)$. 

An invariant $u$ for cyclic equivalent classes of nanophrases is a map from the set of $n$-component nanophrases in an abelian group $G$, 
which takes equal values on nanophrases related by isomorphisms and $\nu$-shift moves.  
In other words, it is a map $u:P(\alpha ,\nu ,n) \rightarrow G$.

Given an invariant $u:P(\alpha ,\nu ,n) \rightarrow G$, 
we define its extension $\hat{u} :\mathcal{P}(\alpha ,\nu ,n) \rightarrow G$ by the following rule
\begin{eqnarray*}
\left\{
\begin{array}{l}
\hat{u}(p) = u(p) \hspace{4cm} \text{ if } p\in P_0(\alpha ,\nu ,n) \\  
\hat{u}(xA^\ast yA^\ast z) = \hat{u}(xAyAz)-\hat{u}(xyz) \hspace{4mm} \text{ if } xA^\ast yA^\ast z\in P_m(\alpha ,\nu ,n) ~~ (m\geq 1),
\end{array}
\right.
\end{eqnarray*}
where $A$ is a non-singular letter such that $|A|^\ast $ is equal to $|A^\ast |$ and $x$, $y$ and $z$ are arbitrary sequences of letters possibly including `$|$' or  `$ \emptyset $'. 
This map is well defined because the result does not depend on the order of the singular letters which we exclude.
That is, the following two calculations have the same result.
\begin{align*}
\hat{u}(xA^\ast yB^\ast  & zA^\ast wB^\ast t) \\
&= \hat{u}(xA yB^\ast zA wB^\ast t) - \hat{u}(x yB^\ast z wB^\ast t) \\
&= \hat{u}(xA yB zA wB t) - \hat{u}(xA y zA w t) - \hat{u}(x yB z wB t) + \hat{u}(x y z w t), \\
\hat{u}(xA^\ast yB^\ast  & zA^\ast wB^\ast t) \\
&= \hat{u}(xA^\ast yB zA^\ast wB t) - \hat{u}(xA^\ast y zA^\ast w t) \\
&= \hat{u}(xA yB zA wB t) - \hat{u}(x yB z wB t) - \hat{u}(xA y zA w t) + \hat{u}(x y z w t).
\end{align*}

Let $p$ and $q$ be $n$-component nanophrases.
A nanophrase $q$ is a {\it subphrase} of $p$, denoted by $q\triangleleft p$, if it is a nanophrase obtained from $p$ by deleting pairs of letters.
Here each letter does not change the value of the projection.
We use this word even if we eliminate no-letters. 
If the rank of $p$ is $k$, then $p$ has exactly $2^k$ subphrases.

\begin{example}
Let $p=ABA|B$. Then the subphrases of $p$ are $ABA|B$, $AA|\emptyset$, $B|B$ and $\emptyset|\emptyset$.
\end{example}

For any nanophrases $p$ and $q$, we define $\delta{(p,q)}$ by
\[ \delta{(p,q)}=\text{rank}(p)-\text{rank}(q). \]

\begin{proposition}\label{f.pro0}
For any $n$-component nanophrases $p$ in $\mathcal{P}(\alpha ,\nu ,n)$ and any invariant $u:P(\alpha ,\nu ,n) \rightarrow G$, 
\begin{align}
\hat{u}(p)= \sum_{p''\triangleleft q \triangleleft p'} (-1)^{\delta{(p',q)}}u(q),   \label{f.in prop0}
\end{align} 
where $p'$ is the non-singular phrase obtained by replacing all singular letters of $p$ by the corresponding non-singular letters 
and $p''$ is the subphrase of $p$ obtained from $p$ by deleting all singular letters.
\end{proposition}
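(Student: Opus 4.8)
The plan is to argue by induction on the number $m$ of singular letters of $p$. For the base case $m=0$ we have $p=p'=p''$, so the only subphrase $q$ with $p''\triangleleft q\triangleleft p'$ is $q=p$ itself, for which $\delta(p',q)=0$; the right-hand side of \eqref{f.in prop0} thus reduces to $u(p)=\hat u(p)$, as desired.

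For the inductive step, assume the formula holds for all nanophrases in $\mathcal P(\alpha,\nu,n)$ with fewer than $m$ singular letters, and let $p$ have $m\geq 1$ of them. Choose one singular letter and write $p=xA^\ast yA^\ast z$. By the defining recursion for $\hat u$,
\[ \hat u(p)=\hat u(xAyAz)-\hat u(xyz). \]
Both $p_1:=xAyAz$ and $p_2:=xyz$ carry only $m-1$ singular letters, so the induction hypothesis applies to each.

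The key step is then to identify the two resulting summations as the two halves of the desired sum, according to whether the distinguished letter $A$ occurs in $q$. First I would observe that replacing every singular letter by its nonsingular counterpart gives $p_1'=p'$ and $p_2'=p'$ with the pair $A$ deleted, while deleting all singular letters gives $p_1''=p''$ together with the pair $A$, and $p_2''=p''$. Consequently the subphrases $q$ with $p_1''\triangleleft q\triangleleft p_1'$ are exactly those $q$ with $p''\triangleleft q\triangleleft p'$ that contain $A$, and the subphrases $q$ with $p_2''\triangleleft q\triangleleft p_2'$ are exactly those that do not contain $A$. It remains to check the signs: for $q$ containing $A$ we have $\delta(p_1',q)=\delta(p',q)$, whereas for $q$ not containing $A$ we have $\mathrm{rank}(p_2')=\mathrm{rank}(p')-1$, hence $\delta(p_2',q)=\delta(p',q)-1$ and the extra factor $(-1)^{-1}=-1$ precisely cancels the minus sign in front of $\hat u(p_2)$. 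Adding the two contributions recovers $\sum_{p''\triangleleft q\triangleleft p'}(-1)^{\delta(p',q)}u(q)$.

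The main point requiring care is this sign bookkeeping, since the whole identity hinges on the single rank drop produced by deleting the pair $A$ interacting correctly with the minus sign in the recursion. A secondary subtlety is that the recursion expands one chosen singular letter at a time, so one should note that the final result is independent of which letter is chosen; this is exactly the well-definedness already recorded after the definition of $\hat u$, which lets us select any singular letter in the inductive step without loss of generality.
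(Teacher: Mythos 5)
Your proof is correct, and it is essentially the paper's argument: the paper likewise splits the sum $\sum_{p''\triangleleft q\triangleleft p'}$ according to whether $q$ contains the distinguished letter $A$, matches the two halves with the recursion $\hat u(xA^\ast yA^\ast z)=\hat u(xAyAz)-\hat u(xyz)$, and iterates over the singular letters (you run the induction from $\hat u(p)$ outward, the paper collapses the sum inward, but the decomposition and the sign bookkeeping via the rank drop of $p_2'$ are identical). No gap.
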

\begin{proof}
If $p$ does not have singular letters, then this assertion is trivial.
If $p$ has some singular letter $A^\ast$, then we can write that $p'=xAyAz$, where $x$, $y$ and $z$ are non-singular phrases and $p''$ is a subphrase of $xyz$.
Therefore the right hand side of the identity (\ref{f.in prop0}) can be written by the following formula in the additive abelian group $\mathbb{Z}P(\alpha ,\nu ,n)$ freely generated by $P(\alpha ,\nu ,n)$
\begin{align*} 
 \sum_{p''\triangleleft q \triangleleft xAyAz} (-1)^{\delta{(xAyAz,q)}} \hat{u}(q). 
\end{align*} 
We divide this sum into two sums according to whether the phrase $q$ contains the letter $A$ or not, 
and then use the extension rule of $\hat{u}$.
For any subphrases $p_1$ and $p_2$ of $p$, let $p_1 \cup_p p_2$ denote the subphrase obtained from $p$ by deleting the letters contained in neither $p_1$ nor $p_2$.
Then the above sum is divided into 
\begin{align*} 
& \sum_{(p''\cup_{xAyAz} AA) \triangleleft q \triangleleft xAyAz} (-1)^{\delta{(xAyAz,q)}}\hat{u}(q) + \sum_{p'' \triangleleft q \triangleleft xyz} (-1)^{\delta{(xyz,q)}}\hat{u}(q) \\
&~~~~~~~~~~~= \sum_{(p''\cup_p A^*A^*) \triangleleft q \triangleleft xA^*yA^*z} (-1)^{\delta{(xA^*yA^*z,q)}}\hat{u}(q). 
\end{align*} 
Repeating this simplification and we have 
\begin{align*}
\sum_{p \triangleleft q \triangleleft p} (-1)^{\delta{(p,q)}}\hat{u}(q) = \hat{u}(q). 
\end{align*} 
\end{proof}

A map $u :P(\alpha ,\nu ,n) \rightarrow G$ is a {\it finite type invariant} 
if there exists a non-negative integer $m$ such that for any $n$-component nanophrases $p$ with more than $m$ singular letters, $\hat{u}(p)$ is zero.
The minimal such $m$ is called the {\it degree} of $u$.

The map $u:P(\alpha ,\nu ,n) \rightarrow G$ is a {\it universal finite type invariant} of degree $m$
if for any finite type invariant $v$ of degree less than or equal to $m$ taking values in some abelian group $H$,
there exists a homomorphism $f$ from $G$ to $H$ such that 
\[ \xymatrix{  
P(\alpha ,\nu ,n) \ar[r]^(0.6)u   \ar[dr]_(0.6)v  &  \ar@{}[dl]|(0.3)\circlearrowright G  \ar[d]^f \\
 & H 
 } \]

In particular, if $p$ and $q$ are two $n$-component nanophrases over $\alpha $ which can be distinguished by a finite type invariant of degree less than or equal to $m$ and $u$ is the universal invariant of degree $m$, then $u(p)$ is not equal to $u(q)$.

In \cite{MR1763963}, Goussarov, Polyak and Viro constructed the universal finite type invariants of virtual knots and links.
In \cite{MR2786675}, Gibson and Ito constructed the universal finite type invariants for homotopy classes of nanophrases.
In a similar way, we construct the universal finite type invariants for cyclic equivalence classes of nanophrases in the next section.

\section{Universal finite type invariants}

In this section, following the approach by Goussarov, Polyak and Viro in \cite{MR1763963} we construct the universal finite type invariants for cyclic equivalence classes of nanophrases over general $\alpha$.

Let $\mathbb{Z}P(\alpha,n)$ be the additive abelian group freely generated by isomorphism classes of $n$-component nanophrases. 
Let $\mathbb{Z}P(\alpha , \nu ,n)$ be the additive abelian group freely generated by $P(\alpha , \nu ,n)$.

Let $G(\alpha ,\nu ,n)$ be the group obtained from $\mathbb{Z}P(\alpha, n)$ by taking quotient with the following relations,
\begin{equation*}
\begin{cases}
w|AxAy|t-w|xByB|t=0 \\
w|Az|t-w|zA|t=0,
\end{cases}
\end{equation*}
where $|B|=\nu (|A|)$, $x$, $y$ and $z$ do not contain `$|$' and $z$ does not contain $A$.

The projection from $\mathbb{Z}P(\alpha, n)$ to $\mathbb{Z}P(\alpha , \nu ,n)$ is a homomorphism, and we denote it by $I$. 

\begin{proposition}
The homomorphism $I$ induces an isomorphism $\hat{I}: G(\alpha, \nu, n) \rightarrow \mathbb{Z}P(\alpha , \nu ,n)$. 
\end{proposition}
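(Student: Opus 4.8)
The plan is to descend $I$ to $\hat I$ and then produce an explicit inverse. First I would verify that the two defining relations of $G(\alpha,\nu,n)$ lie in $\ker I$. The pair $w|AxAy|t$ and $w|xByB|t$ with $|B|=\nu(|A|)$ is exactly the nanophrase obtained by a $\nu$-shift move on the displayed component: shifting the leading $A$ to the end flips its sign because $A$ recurs within that component, and relabelling the shifted letter as $B$ gives $xByB$. Likewise $w|Az|t$ and $w|zA|t$ with $z$ free of $A$ is a $\nu$-shift move in which the sign is preserved, since the partner of $A$ then lies in another component. As $I$ identifies cyclically equivalent phrases, it kills both relations, so the universal property of the quotient yields a well-defined homomorphism $\hat I\colon G(\alpha,\nu,n)\to\mathbb{Z}P(\alpha,\nu,n)$.

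Surjectivity of $\hat I$ is immediate, since $I$ is surjective: every class in $P(\alpha,\nu,n)$ has a nanophrase representative.

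For injectivity I would build a two-sided inverse $J\colon\mathbb{Z}P(\alpha,\nu,n)\to G(\alpha,\nu,n)$ sending the class of a nanophrase to its image in $G(\alpha,\nu,n)$. The heart of the argument is well-definedness. Cyclic equivalence is generated by isomorphisms and single $\nu$-shift moves; isomorphic phrases already coincide as generators of $\mathbb{Z}P(\alpha,n)$, so it suffices to check that each single $\nu$-shift move becomes trivial in $G(\alpha,\nu,n)$. Consider the shift on a component written as $Ax$. If the second occurrence of $A$ lies in $x$, write the component as $AxAy$ with $x,y$ free of `$|$'; the shift produces $xByB$ with $|B|=\nu(|A|)$, which is relation 1. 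Otherwise the partner of $A$ lies in a different component, so $x$ contains no $A$; writing the component as $Az$, the shift produces $w|zA|t$ from $w|Az|t$ with the sign unchanged, which is relation 2. These two alternatives exhaust the sign-behaviour in the definition of the shift move, so $J$ respects cyclic equivalence and is well-defined.

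Finally, $J\circ\hat I$ and $\hat I\circ J$ are both the identity, as one checks on generators: each composite passes to the image of a representative and back, returning the same class. I expect the only genuine obstacle to be the well-definedness of $J$ — confirming that relations 1 and 2 capture precisely the two sign cases of a single shift move, and hence generate all of cyclic equivalence; the remainder is formal bookkeeping.
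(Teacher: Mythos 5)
Your proposal is correct. The route differs from the paper's in packaging rather than in substance: the paper proves injectivity by computing the kernel of $I$ directly --- writing an element of $\ker I$ as $\sum_{x\in X}e_x x$, grouping the terms by cyclic equivalence class, observing that the coefficients within each class must sum to zero, and concluding that each such zero-sum combination lies in the relations subgroup --- whereas you construct an explicit two-sided inverse $J$ and reduce everything to its well-definedness. The two arguments are logically equivalent for a quotient of a free abelian group, and both hinge on the same key observation, which you make explicit and the paper leaves implicit: the two defining relations of $G(\alpha,\nu,n)$ are precisely the two sign cases of a single $\nu$-shift move (partner of $A$ in the same component, sign flipped and the letter renamed $B$ with $|B|=\nu(|A|)$; partner elsewhere, sign preserved), and isomorphisms are already absorbed because $\mathbb{Z}P(\alpha,n)$ is freely generated by isomorphism classes. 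Your version buys a cleaner injectivity argument at the cost of having to check well-definedness of $J$; the paper's version avoids defining $J$ but needs the small coefficient-bookkeeping step. Either is acceptable, and your case analysis for the shift move is complete (the degenerate case of an empty or single-letter component is covered trivially by the second relation with $z=\emptyset$).
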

\begin{proof}
It is clear that $I$ is onto and the subgroup generated by the element $w|AxAy|t-w|xByB|t$ and $w|Az|t-w|zA|t$ in $\mathbb{Z}P(\alpha, n)$ is a subset of the kernel of $I$.
Contrarily, we show that the kernel of $I$ is a subset of the subgroup in question.
For any $p$ in $\mathbb{Z}P(\alpha, n)$, there exists a finite subset $X$ of $P(\alpha, n)$ such that $p=\sum_{x \in X} e_x x$,
where $e_x$ is an integer.
If $p$ is contained in the kernel of $I$, we have 
\[0= I \bigl(\sum_{x\in X}e_x x \bigr) =\sum_{x\in X} e_x[x] =\sum_{x\in X'} \bigl(\sum_{x\sim y, y\in X}e_y \bigr) [x], \]
where $X'$ is a set of representatives. 
It is true if and only if for any $x \in X'$, $\sum_{x\sim y, y\in X}e_y=0$. 
Therefore $\sum_{x\sim y, y\in X}e_y y$ is contained in the above subgroup and so is $p$.
\end{proof}

We then define a homomorphism $\theta _n : \mathbb{Z}P(\alpha, n) \rightarrow \mathbb{Z}P(\alpha, n)$ as follows.
For any $n$-component nanophrase $p$, $\theta _n(p)$ is the sum of all the subphrases of $p$ considered as an element of $\mathbb{Z}P(\alpha, n)$.
We then extend $\theta _n$ linearly to all of $\mathbb{Z}P(\alpha, n)$.
Note that for any nanophrase $p$,  $\theta _n(p)$ can be written as 
\[\theta _n(p) =\sum_{q\triangleleft p}q. \]

We then define another homomorphism $\varphi _n :\mathbb{Z}P(\alpha, n) \rightarrow \mathbb{Z}P(\alpha, n)$ as follows.
For any $n$-component nanophrase $p$,
\[\varphi _n(p)=\sum_{q\triangleleft p}(-1)^{\delta{(p,q)}}q. \]
We then extend $\varphi _n$ linearly to all of $\mathbb{Z}P(\alpha, n)$.

\begin{proposition} \label{f.pro1}
The homomorphism $\theta _n$ is an isomorphism and its inverse is given by $\varphi_n$.
\end{proposition}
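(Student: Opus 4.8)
The plan is to verify directly that $\varphi_n$ and $\theta_n$ are two-sided inverses of one another on generators; since both maps are defined by $\mathbb{Z}$-linear extension, it suffices to check $\varphi_n(\theta_n(p)) = p$ and $\theta_n(\varphi_n(p)) = p$ for an arbitrary $n$-component nanophrase $p$. The key structural observation is that the subphrases of a fixed $p$ of rank $k$ are naturally indexed by the subsets of its letter set $\mathcal{A} = \{A_1, \dots, A_k\}$: deleting pairs of letters amounts to choosing which letters to keep. Writing $p_S$ for the subphrase retaining exactly the letters in $S \subseteq \mathcal{A}$, we have $q \triangleleft p$ precisely when $q = p_S$ for some $S$, and $\text{rank}(p_S) = |S|$, so that $\delta(p_S, p_T) = |S| - |T|$ whenever $T \subseteq S$. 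In particular the poset of subphrases below $p$ is a Boolean lattice of rank $k$, and the whole statement is an instance of M\"obius inversion on this lattice.

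First I would expand the composite. Using the indexing above together with the linearity of $\varphi_n$,
\[
\varphi_n(\theta_n(p)) = \sum_{S \subseteq \mathcal{A}} \varphi_n(p_S) = \sum_{S \subseteq \mathcal{A}} \sum_{T \subseteq S} (-1)^{|S| - |T|} p_T.
\]
Interchanging the order of summation to collect the coefficient of each $p_T$ gives
\[
\varphi_n(\theta_n(p)) = \sum_{T \subseteq \mathcal{A}} \left( \sum_{T \subseteq S \subseteq \mathcal{A}} (-1)^{|S| - |T|} \right) p_T.
\]
For a fixed $T$ with $|T| = j$, parametrizing $S = T \cup U$ with $U \subseteq \mathcal{A} \setminus T$ turns the inner coefficient into $\sum_{i=0}^{k-j} \binom{k-j}{i} (-1)^i = (1-1)^{k-j}$, which equals $0$ unless $j = k$, that is unless $T = \mathcal{A}$, in which case it equals $1$. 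Hence only the term $p_{\mathcal{A}} = p$ survives and $\varphi_n(\theta_n(p)) = p$. The reverse composite $\theta_n(\varphi_n(p))$ is handled by the identical bookkeeping, the only change being that the surviving binomial sum becomes $(-1)^{k-j}(1-1)^{k-j}$, which again isolates $T = \mathcal{A}$. Together these two identities show that $\theta_n$ is bijective with inverse $\varphi_n$.

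The step I expect to require the most care is the indexing of subphrases by subsets and the passage from generators to isomorphism classes: a priori two distinct subsets $S \neq S'$ could yield isomorphic subphrases $p_S \cong p_{S'}$, so that $\theta_n(p)$ carries a coefficient greater than one on some generator. I would stress that this causes no difficulty, since $\varphi_n$ is applied $\mathbb{Z}$-linearly and the computation above sums the contributions $\varphi_n(p_S)$ over all subsets $S$ independently; any coincidence among the $p_S$ is automatically respected by this summation. Thus the binomial cancellation is valid verbatim, and no separate argument about multiplicities is needed.
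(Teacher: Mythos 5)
Your proof is correct and follows essentially the same route as the paper's: expand $\varphi_n(\theta_n(p))$, interchange the order of summation, and kill every term except $p$ itself via the alternating binomial sum $(1-1)^{k-j}$, with the reverse composite handled symmetrically. Your explicit indexing of subphrases by subsets of the letter set, and your remark that possible coincidences $p_S \cong p_{S'}$ are harmless because the sums are taken over subsets rather than over isomorphism classes, make precise a point the paper leaves implicit, but the underlying argument is the same.
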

\begin{proof}
For any $n$-component nanophrase $p$, 
\begin{align}
\varphi_n (\theta_n (p)) &= \varphi_n (\sum_{q\triangleleft p}q) \notag \\
&= \sum_{q\triangleleft p} \sum_{r\triangleleft q}(-1)^{\delta{(q,r)}}r \notag \\
&= \sum_{r\triangleleft p} \Bigl( \sum_{r \triangleleft q \triangleleft p} (-1)^{\delta{(q,r)}}r \Bigl) \label{f.prop1} \\
&= \sum_{r\triangleleft p} \Bigl( \sum_{i=0}^{\delta{(p,r)}} \binom {\delta{(p,r)}}{i} (-1)^{\delta{(p,r)}-i} r \Bigl)  \notag \\
&= p. \notag 
\end{align}

The forth line of (\ref{f.prop1}) means that we take a sum of $q$ satisfying $r \triangleleft q \triangleleft p$.
Here $i$ is the difference between the rank of $p$ and $q$.

Thus $\varphi_n (\theta_n (p))=p$ and so $\varphi_n \circ \theta_n$ is the identity map.
Similarly,
\begin{align*}
\theta_n (\varphi_n (p)) &= \theta_n \Bigl(\sum_{q\triangleleft p}(-1)^{\delta{(p,q)}}q \Bigr) \\
&= \sum_{q\triangleleft p}(-1)^{\delta{(p,q)}} \Bigl(\sum_{r\triangleleft q}r \Bigr) \\
&= p. 
\end{align*}
Thus $\theta_n \circ \varphi_n$ is also the identity map.
Therefore $\theta _n$ is an isomorphism and its inverse is given by $\varphi_n$.
\end{proof}

\begin{proposition}
The map $\theta _n$ induces an isomorphism $\hat{\theta } _n :G(\alpha , \nu ,n) \rightarrow G(\alpha , \nu ,n)$.
\end{proposition}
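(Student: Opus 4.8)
The plan is to show that both $\theta_n$ and its inverse $\varphi_n$ descend to the quotient $G(\alpha,\nu,n)$, and then to invoke Proposition \ref{f.pro1}. Write $\pi:\mathbb{Z}P(\alpha,n)\to G(\alpha,\nu,n)$ for the quotient projection and set $R=\ker\pi$; by definition of $G(\alpha,\nu,n)$ the subgroup $R$ is generated by the two families of elements $w|AxAy|t-w|xByB|t$ (with $|B|=\nu(|A|)$) and $w|Az|t-w|zA|t$ (with $z$ containing neither `$|$' nor $A$). Since $\theta_n$ and $\varphi_n$ are homomorphisms, it suffices to prove $\theta_n(R)\subseteq R$ and $\varphi_n(R)\subseteq R$, and for this it is enough to evaluate each map on a single generator of each family. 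Once this is done, $\pi\circ\theta_n$ and $\pi\circ\varphi_n$ annihilate $R$, hence factor through $G(\alpha,\nu,n)$ as homomorphisms $\hat\theta_n$ and $\hat\varphi_n$; the identities $\varphi_n\circ\theta_n=\theta_n\circ\varphi_n=\mathrm{id}$ from Proposition \ref{f.pro1} then descend to $\hat\varphi_n\circ\hat\theta_n=\hat\theta_n\circ\hat\varphi_n=\mathrm{id}$, so $\hat\theta_n$ is an isomorphism with inverse $\hat\varphi_n$.

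The heart of the argument is the computation of $\theta_n$ on a generator, which I would organize by sorting subphrases according to whether they retain the distinguished letter. Consider the first generator $p-q$ with $p=w|AxAy|t$ and $q=w|xByB|t$. Every subphrase of $p$ either keeps $A$ or deletes it, and likewise every subphrase of $q$ keeps or deletes $B$. The subphrases of $p$ deleting $A$ are exactly the subphrases of $w|xy|t$, and those of $q$ deleting $B$ are again the subphrases of the same phrase $w|xy|t$ — the shift move alters only the sign of the moved letter, so every surviving letter carries an identical sign on both sides — hence these two families coincide termwise and cancel in $\theta_n(p)-\theta_n(q)$. The subphrases retaining the distinguished letter are indexed by the same subsets $T'$ of the common letters, and for each $T'$ the corresponding pair has the form $w'|Ax'Ay'|t'$ and $w'|x'By'B|t'$, with $x',y'$ free of `$|$' and $|B|=\nu(|A|)$ still holding; that is, each such pair is again a generator of the first family. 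Thus $\theta_n(p)-\theta_n(q)$ is a sum of generators of $R$, so lies in $R$.

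For the second generator $p-q$ with $p=w|Az|t$, $q=w|zA|t$ and $z$ free of $A$, the same split applies: subphrases deleting $A$ are the subphrases of $w|z|t$ on both sides and cancel, while subphrases keeping $A$ pair into $w'|Az'|t'$ and $w'|z'A|t'$ with $z'$ still free of `$|$' and of $A$, each a generator of the second family. The computation for $\varphi_n$ is identical except for tracking the coefficients $(-1)^{\delta(p,q)}$; since a shift move preserves rank one has $\mathrm{rank}(p)=\mathrm{rank}(q)$, so paired subphrases receive equal sign weights and the same cancellation and pairing go through, giving $\varphi_n(R)\subseteq R$. I expect the only genuine bookkeeping obstacle to be phrasing the subphrase pairing cleanly at the level of isomorphism classes — specifically, noting that even when distinct subsets $T'$ yield isomorphic subphrases, the displayed identity still expresses $\theta_n(p)-\theta_n(q)$ (and $\varphi_n(p)-\varphi_n(q)$) as an integral combination of generators of $R$, which is all that the descent argument requires.
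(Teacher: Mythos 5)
Your proposal is correct and follows essentially the same route as the paper: evaluate $\theta_n$ on each generator of the relation subgroup, split the subphrases according to whether the distinguished letter survives, cancel the terms where it is deleted, and observe that the remaining terms pair into generators again, so that the map descends and Proposition \ref{f.pro1} gives invertibility. Your explicit sign-tracking for $\varphi_n$ fills in a step the paper only asserts (``and so is $\hat{\varphi}_n$''), but it is not a different argument.
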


\begin{proof}
For any $w|AxAy|t-w|xByB|t$ in $\mathbb{Z}P(\alpha, n)$, we have

\begin{align}
\theta_n & (w|AxAy|t-w|xByB|t) \notag \\
&= \sum_{q\triangleleft AxAy}w|q|t - \sum_{q\triangleleft xByB}w|q|t \notag \\
&= \Bigl(\sum_{AA\triangleleft q\triangleleft AxAy}w|q|t + \sum_{q\triangleleft xy}w|q|t \Bigr) \label{f.prop2} \\
& \hspace{2em}- \Bigl(\sum_{BB\triangleleft q\triangleleft xByB}w|q|t + \sum_{q\triangleleft xy}w|q|t \Bigr)  \notag \\
&= \sum_{AA\triangleleft q\triangleleft AxAy}w|q|t - \sum_{BB\triangleleft q\triangleleft xByB}w|q|t.  \notag 
\end{align}
The last line of (\ref{f.prop2}) can be written by a finite sum of the form $w|AxAy|t-w|xByB|t$.
Similarly $\theta_n(w|Az|t-w|zA|t)$ can be written by a finite sum of the form $w|Az|t-w|zA|t$.
Therefore $\hat{\theta }_n$ is a homomorphism and so is $\hat{\varphi}_n$. 
Thus $\hat{\theta }_n$ is an isomorphism. 
\end{proof}

For each non-negative integer $m$, we define a map $O_m:P(\alpha, n) \rightarrow P(\alpha, n)$ by 
\begin{equation}
O_m(p)=\begin{cases}
p \hspace{2cm} \text{ if rank}(p) \leq m, \\
0 \hspace{2cm} \text{ otherwise}. 
\end{cases}
\end{equation}
We extend $O_m$ linearly to all of $\mathbb{Z}P(\alpha, n)$.

We introduce the following relation on $G(\alpha ,\nu ,n)$, 
\[p=0\]
if $p$ is an $n$-component nanophrase with rank($p$) $>m$.

Let $G_m(\alpha ,\nu ,n)$ be the group obtained from $G(\alpha ,\nu ,n)$ by taking quotient with the above relation which depends on $m$.  
Then $G_m(\alpha ,\nu ,n)$ is generated by the set of $n$-component nanophrases with rank $m$ or less.
As this set is finite, $G_m(\alpha ,\nu ,n)$ is a finitely generated abelian group.
Clearly $O_m$ induces a homomorphism $\hat{O}_m:G(\alpha ,\nu ,n) \rightarrow G_m(\alpha ,\nu ,n)$.
  
Let $\Gamma _{m,n}$ be the composition $\hat{O}_m \circ \hat{\theta } _n \circ \hat{I}^{-1}$ of $\hat{I}^{-1}$, $\hat{\theta } _n$ and $\hat{O}_m$.
Then $\Gamma _{m,n}$ is a homomorphism from $\mathbb{Z}P(\alpha,\nu ,n)$ to $G_m(\alpha ,\nu ,n)$.

The main theorem of this section is as follows.
\begin{theorem}\label{f.thm}
The map $\Gamma _{m,n} : \mathbb{Z}P(\alpha,\nu ,n) \rightarrow G_m(\alpha ,\nu ,n)$ is the universal finite type invariant of degree $m$ for cyclic equivalence classes of nanophrases.
\end{theorem}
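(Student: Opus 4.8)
The plan is to verify the two defining properties of a universal finite type invariant of degree $m$: first that $\Gamma_{m,n}$ is itself a finite type invariant of degree (at most, then exactly) $m$, and second that every finite type invariant $v$ of degree $\le m$ factors through it.

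For the finite type property, I would first record the reformulation $\Gamma_{m,n}=\hat O_m\circ\Theta$, where $\Theta=\hat\theta_n\circ\hat I^{-1}:\mathbb ZP(\alpha,\nu,n)\to G(\alpha,\nu,n)$ sends a cyclic class to the sum of all its subphrase classes. Since $\hat O_m$ is a homomorphism, the extension operation $\widehat{(\cdot)}$ commutes with it (check this on non-singular phrases and against the defining recursion), so $\hat\Gamma_{m,n}=\hat O_m\circ\hat\Theta$ and it suffices to understand $\hat\Theta(p)$ for a singular phrase $p$. Applying Proposition \ref{f.pro0} and expanding $\Theta$ as its subphrase sum, I would compute the coefficient of each subphrase $p'_T$ of $p'$ in $\hat\Theta(p)$ by an inclusion–exclusion over the subsets of the set $S$ of singular-origin letters; the alternating sum collapses via $\sum_{j}\binom{N}{j}(-1)^{N-j}=0$ for $N\ge 1$, leaving coefficient $1$ exactly for those $T$ containing all of $S$ and $0$ otherwise. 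Hence $\hat\Theta(p)=\sum_{S\subseteq T}[p'_T]$, every term of which has rank at least $|S|$. If $p$ has more than $m$ singular letters, then $|S|>m$ and $\hat O_m$ annihilates every term, so $\hat\Gamma_{m,n}(p)=0$; taking $|S|=m$ leaves the single nonzero term $[p'_S]$, so the degree is exactly $m$.

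For universality, I would first identify $G_m(\alpha,\nu,n)$, via $\hat I$, with the free abelian group on the cyclic equivalence classes of nanophrases of rank at most $m$ (the rank-$>m$ relations simply delete those basis elements). Given a finite type invariant $v:P(\alpha,\nu,n)\to H$ of degree $\le m$, I define $f:G_m(\alpha,\nu,n)\to H$ on these generators by $f([q])=\hat v(q^\ast)$, where $q^\ast$ is $q$ with all letters made singular. This is well defined because $q\mapsto q^\ast$ carries $\nu$-shift moves to $\nu$-shift moves (using $\nu(a^\ast)=\nu(a)^\ast$), so $\hat v(q^\ast)$ depends only on the cyclic class of $q$. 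To check $f\circ\Gamma_{m,n}=v$, I evaluate on a representative $\tilde p$: since $\Gamma_{m,n}(p)=\sum_{q\triangleleft\tilde p,\,\mathrm{rank}(q)\le m}[q]$, Proposition \ref{f.pro0} (with $p'=q$ and $p''=\emptyset_n$) gives $\hat v(q^\ast)=\sum_{r\triangleleft q}(-1)^{\delta(q,r)}v(r)$, and the degree hypothesis forces $\hat v(q^\ast)=0$ whenever $\mathrm{rank}(q)>m$, so the rank restriction may be dropped. The resulting double sum is $v$ applied to $\varphi_n(\theta_n(\tilde p))$, which equals $v(\tilde p)=v(p)$ by Proposition \ref{f.pro1}. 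This produces the required factorization, and the triangular formula $\Gamma_{m,n}(q)=[q]+(\text{lower rank terms})$ shows $\Gamma_{m,n}$ is onto, so $f$ is in fact unique.

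I expect the main obstacle to be the combinatorial bookkeeping in the finite type step: organizing the subphrases of a singular phrase according to which singular-origin letters they retain, and verifying that the inclusion–exclusion coefficients collapse exactly as claimed. The well-definedness of $f$ — that passing to the fully singular phrase $q^\ast$ respects cyclic equivalence under the extended involution — is the other delicate point, though it becomes routine once the shift-move compatibility $\nu(a^\ast)=\nu(a)^\ast$ is used explicitly.
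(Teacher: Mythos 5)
Your proposal is correct and follows essentially the same route as the paper: it reduces the finite type property to showing that $\hat\theta_n\circ\hat I^{-1}$ applied to the alternating sum of Proposition \ref{f.pro0} leaves exactly the subphrases containing all singular-origin letters (which the paper organizes through Lemmas \ref{lem2-1} and \ref{lem2-2} and the auxiliary map $g_{p''}$, while you collapse the coefficients directly by the binomial identity), detects degree exactly $m$ on a phrase with $m$ singular letters, and proves universality by the same factorization $f([q])=\hat v(q^\ast)=v\circ\hat I\circ\hat\theta_n^{-1}(q)$ with the degree hypothesis handling the rank truncation. The only additions beyond the paper's argument are cosmetic (the explicit inclusion--exclusion bookkeeping and the remark on uniqueness of $f$ via surjectivity).
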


\begin{corollary}
For any finite type invariant for $P(\alpha,\nu ,n)$ taking values in an abelian group $G$,
there is a finite subset $H$ of $G$ such that for any $p$ in $P(\alpha,\nu ,n)$ $v(p)$ is represented by a linear combination of elements of $H$.
\end{corollary}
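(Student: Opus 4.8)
The plan is to derive the corollary directly from the universality established in Theorem \ref{f.thm} together with the finite generation of $G_m(\alpha,\nu,n)$. First I would observe that, since $v$ is a finite type invariant, by definition there exists a non-negative integer $m$ for which $\hat v(p)=0$ whenever $p$ has more than $m$ singular letters; thus $v$ is a finite type invariant of degree at most $m$. Applying Theorem \ref{f.thm}, the map $\Gamma_{m,n}$ is the universal finite type invariant of degree $m$, so the defining universal property furnishes a group homomorphism $f:G_m(\alpha,\nu,n)\to G$ with $v=f\circ\Gamma_{m,n}$ on every $p\in P(\alpha,\nu,n)$.

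Next I would invoke the fact, recorded just before Theorem \ref{f.thm}, that $G_m(\alpha,\nu,n)$ is a finitely generated abelian group, generated by the finitely many $n$-component nanophrases of rank at most $m$; denote these generators by $[q_1],\dots,[q_k]$. I would then set $H=\{f([q_1]),\dots,f([q_k])\}$, a finite subset of $G$. For an arbitrary $p\in P(\alpha,\nu,n)$, the element $\Gamma_{m,n}(p)$ lies in $G_m(\alpha,\nu,n)$ and can therefore be written as an integral linear combination $\Gamma_{m,n}(p)=\sum_{i=1}^{k}c_i[q_i]$ with $c_i\in\mathbb{Z}$. Since $f$ is a homomorphism, $v(p)=f(\Gamma_{m,n}(p))=\sum_{i=1}^{k}c_i\,f([q_i])$, which exhibits $v(p)$ as a $\mathbb{Z}$-linear combination of elements of $H$, as required.

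There is essentially no serious obstacle here, as the statement is a formal consequence of universality plus finite generation. The only points that require care are matching the degree of the given invariant $v$ to the index $m$ of the universal invariant $\Gamma_{m,n}$ before invoking the universal property, and observing that because $f$ is a group homomorphism it carries integral linear combinations of the generators $[q_i]$ to integral linear combinations of their images $f([q_i])$, so that the single finite set $H$ works uniformly for all $p$.
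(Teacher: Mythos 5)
Your argument is correct and is precisely the derivation the paper intends (the corollary is stated without proof, as an immediate consequence of Theorem \ref{f.thm} together with the observation made just before it that $G_m(\alpha,\nu,n)$ is finitely generated by the nanophrases of rank at most $m$). Factoring $v=f\circ\Gamma_{m,n}$ through the universal invariant and taking $H$ to be the images of the finitely many generators under the homomorphism $f$ is exactly the right reading.
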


We prepare lemmas for the proof of the theorem.

For any subphrases $p_1$ and $p_2$ of $p$, 
let $p_1 \setminus_p  p_2$ be the subphrase obtained from $p_1$ by deleting the letters contained in $p_2$.
\begin{example}
Let $p=AADBCBCD$, $p_1=AABB$ and $p_2=BCBC$.
Then $p_1 \setminus_p  p_2=AA$.
\end{example}
\begin{lemma}\label{lem2-1}
Let $p$ be an $n$-component nanophrase.
The sum of all the subphrases of $p$ regarded as an element of $\mathbb{Z}P(\alpha,n)$ can be written for any subphrase $t$ of $p$ as follows.
\begin{align}
\sum_{q\triangleleft p}q = \sum_{t\triangleleft r \triangleleft p} \Bigl( \sum_{(r \setminus_p t) \triangleleft s \triangleleft r}s \Bigr). \label{r.l}
\end{align}
\end{lemma}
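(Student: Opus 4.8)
The plan is to prove the identity \eqref{r.l} by showing that both sides, when expanded, count exactly the same collection of subphrases of $p$, each appearing exactly once. The left-hand side $\sum_{q \triangleleft p} q$ is manifestly the formal sum of all $2^{\mathrm{rank}(p)}$ subphrases of $p$, each with coefficient one. So it suffices to show that the right-hand side is a reorganization of this same sum into disjoint groups indexed by the outer variable $r$, with $t$ fixed.

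The key observation is the following bijective decomposition of the set of subphrases. Fix the subphrase $t \triangleleft p$. I would partition the letters of $p$ into those appearing in $t$ and those not appearing in $t$; call the latter set of letter-pairs $p \setminus_p t$. Now for an arbitrary subphrase $s \triangleleft p$, consider $r := s \cup_p t$, the subphrase obtained from $p$ by keeping every letter that lies in $s$ \emph{or} in $t$. By construction $t \triangleleft r \triangleleft p$, so $r$ ranges over exactly the subphrases lying between $t$ and $p$, which is the outer summation range. The main step is then to verify that for each fixed such $r$, the inner condition $(r \setminus_p t) \triangleleft s \triangleleft r$ characterizes precisely those $s$ for which $s \cup_p t = r$. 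Indeed, $s \subseteq r$ is equivalent to $s \triangleleft r$, and the requirement that $s$ together with $t$ recovers all of $r$ forces $s$ to contain every letter of $r$ not already supplied by $t$, i.e.\ every letter of $r \setminus_p t$; conversely any such $s$ satisfies $s \cup_p t = r$. Hence the map $s \mapsto s \cup_p t$ sends the inner index set bijectively onto nothing is lost, and as $r$ ranges over $t \triangleleft r \triangleleft p$ every subphrase $s \triangleleft p$ is hit exactly once.

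Concretely, I would argue that each subphrase $s \triangleleft p$ appears exactly once on the right: it appears in the block indexed by $r = s \cup_p t$ (and in no other block, since $r$ is determined by $s$), and within that block it satisfies the inner bounds because $s \triangleleft s \cup_p t = r$ and $r \setminus_p t \triangleleft s$ (every letter of $r$ outside $t$ must have come from $s$). This gives that the right-hand side equals $\sum_{s \triangleleft p} s$, matching the left-hand side.

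The step I expect to require the most care is the purely combinatorial bookkeeping with the operations $\cup_p$ and $\setminus_p$ on subphrases, together with the definition of $\triangleleft$: one must check that these set-theoretic manipulations on pairs of letters are compatible with the phrase structure (the placement of the separators `$|$' and the deletion of letter-pairs), so that $s \cup_p t$ and $r \setminus_p t$ are genuinely well-defined subphrases of $p$ and the equivalence ``$s \cup_p t = r$ iff $(r \setminus_p t) \triangleleft s \triangleleft r$'' holds at the level of nanophrases rather than merely at the level of underlying letter-sets. Once this compatibility is in hand, the counting argument is routine. An alternative, if the bijective argument feels delicate, is to prove \eqref{r.l} by induction on $\mathrm{rank}(p) - \mathrm{rank}(t)$, peeling off one letter-pair of $p$ not in $t$ at a time; but I expect the direct bijection to be cleaner.
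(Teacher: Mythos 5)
Your proof is correct and follows essentially the same route as the paper: both arguments hinge on assigning to each subphrase $s\triangleleft p$ the block $r=s\cup_p t$ and observing that $(r\setminus_p t)\triangleleft s\triangleleft r$ picks out exactly that fiber. The only difference is cosmetic --- the paper finishes by counting terms ($2^{\delta(p,t)}\cdot 2^{\mathrm{rank}(t)}=2^{\mathrm{rank}(p)}$) after checking surjectivity, whereas you verify the bijection directly, which is if anything slightly cleaner.
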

\begin{proof}
For any subphrase $q$ of $p$, the right hand side of the above identity (\ref{r.l}) contains $q$.
In fact, letting $r$ be $t\cup_p q$, we have $t\triangleleft r\triangleleft p$ and $(r\setminus_p t) \triangleleft q\triangleleft r$.
For the left hand side of (\ref{r.l}), the number of terms is $2^{\text{rank}(p)}$.
For the right hand side, the number of subphrases $r$ such that $t\triangleleft r\triangleleft p$ is $2^{\delta{(p,t)}}$.
On the other hand, the number of subphrases $s$ such that $(r\setminus t) \triangleleft s\triangleleft r$ does not depend on $r$ and is $2^{\text{rank}(t)}$.
Therefore the number of terms on the right hand side of (\ref{r.l}) is $2^{\text{rank}(p)}$.
Thus we conclude this lemma.
\end{proof}

\begin{lemma}\label{lem2-2}
Let $p$ be an $n$-component nanophrase.
For any subphrase $t$ of $p$,
\[ \sum_{t\triangleleft q \triangleleft p} (-1)^{\delta{(p,q)}} \Bigl( \sum_{t \triangleleft r \triangleleft q}r \Bigr) = p. \]
\end{lemma}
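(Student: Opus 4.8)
The plan is to reduce the identity to the elementary alternating binomial relation $(1-1)^N=0^N$, exactly as in the proof of Proposition \ref{f.pro1}. Since every letter of a nanophrase occurs exactly twice, a subphrase of $p$ is completely determined by the subset of distinct letters it retains; under this correspondence $\triangleleft$ becomes set inclusion and $\mathrm{rank}$ becomes cardinality. Writing $L$ for the set of distinct letters of $p$ and $T\subseteq L$ for the letters of $t$, the nested condition $t\triangleleft r\triangleleft q\triangleleft p$ translates into the chain of inclusions $T\subseteq R\subseteq Q\subseteq L$.

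First I would interchange the order of the two summations so as to collect, for each fixed subphrase $r$ with $t\triangleleft r\triangleleft p$, its total coefficient. A subphrase $q$ contributes the term $(-1)^{\delta(p,q)}r$ precisely when $t\triangleleft r\triangleleft q\triangleleft p$; with $r$ fixed this amounts to the single constraint $r\triangleleft q\triangleleft p$, the condition $t\triangleleft q$ being automatic from $t\triangleleft r\triangleleft q$. Hence the left-hand side equals
\[
\sum_{t\triangleleft r\triangleleft p}\Bigl(\sum_{r\triangleleft q\triangleleft p}(-1)^{\delta(p,q)}\Bigr)r .
\]

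Next I would evaluate the inner coefficient by grouping the subphrases $q$ with $r\triangleleft q\triangleleft p$ according to their rank. There are exactly $\binom{\delta(p,r)}{s}$ such $q$ with $\mathrm{rank}(q)=\mathrm{rank}(r)+s$ for $0\le s\le\delta(p,r)$, and for each of them $\delta(p,q)=\delta(p,r)-s$. Therefore
\[
\sum_{r\triangleleft q\triangleleft p}(-1)^{\delta(p,q)}
=\sum_{s=0}^{\delta(p,r)}\binom{\delta(p,r)}{s}(-1)^{\delta(p,r)-s}
=(1-1)^{\delta(p,r)},
\]
which is $0$ when $\delta(p,r)>0$ and $1$ when $\delta(p,r)=0$, that is, exactly when $r=p$. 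Since $t\triangleleft p$ always holds, only the term $r=p$ survives with coefficient $1$, and the whole sum collapses to $p$.

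The one genuinely delicate point is the interchange of summation: I must verify that for each fixed $r$ the effective range of $q$ is precisely $\{q:\,r\triangleleft q\triangleleft p\}$ and that no residual constraint coming from $t$ survives this swap. Once that bookkeeping is settled, the collapse of the binomial sum is routine and mirrors the computation in the fourth line of \eqref{f.prop1}.
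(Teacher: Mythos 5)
Your proof is correct and is exactly the argument the paper intends: the paper omits the proof of this lemma, referring the reader to the computation in Proposition \ref{f.pro1}, and your interchange of summation followed by the collapse of $\sum_{s}\binom{\delta(p,r)}{s}(-1)^{\delta(p,r)-s}=(1-1)^{\delta(p,r)}$ is precisely that computation, with the harmless extra observation that the constraint $t\triangleleft q$ is absorbed by $t\triangleleft r\triangleleft q$. Nothing further is needed.
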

\begin{proof}
The proof can be obtained by a similar way to proposition \ref{f.pro1}, and we omit it.
\end{proof}

\begin{proof}[Proof of Theorem \ref{f.thm} ]
First of all, we prove that $\Gamma _{m,n}$ is a finite type invariant of degree less than or equal to $m$.
Let $\hat{\Gamma} _{m,n} :\mathcal{P}(\alpha,\nu ,n) \rightarrow G_m(\alpha ,\nu ,n)$ be an extension of $\Gamma _{m,n}$.
Let $p$ be an element in $P_k(\alpha,\nu ,n)$ where $k>m$. 
Let $p'$ be the non-singular phrase obtained from $p$ by replacing all singular letters of $p$ with the corresponding non-singular letters 
and $p''$  the subphrase of $p$ obtained from $p$ by deleting all singular letters.
Then by proposition \ref{f.pro0}, the definition of $\Gamma _{m,n}$ and lemma \ref{lem2-1}, we have
\begin{align*}
\hat{\Gamma} _{m,n}(p) 
&= \Gamma _{m,n} \Bigl( \sum_{p''\triangleleft q \triangleleft p'} (-1)^{\delta(p',q)}q \Bigr) \\
&= \hat{O}_m \circ \hat{\theta} \circ \hat{I}^{-1} \Bigl( \sum_{p''\triangleleft q \triangleleft p'} (-1)^{\delta(p',q)}q \Bigr) \\
&= \hat{O}_m \circ \hat{\theta} \Bigl( \sum_{p''\triangleleft q \triangleleft p'} (-1)^{\delta(p',q)}q \Bigr) \\
&= \hat{O}_m\Bigl( \sum_{p''\triangleleft q \triangleleft p'} (-1)^{\delta(p',q)} (\sum_{r\triangleleft q}r) \Bigr) \\
&= \hat{O}_m \Bigl( \sum_{p''\triangleleft q \triangleleft p'} (-1)^{\delta(p',q)} \bigl( \sum_{p''\triangleleft s \triangleleft q} \sum_{(s\setminus_s p'')\triangleleft u \triangleleft s} u \bigr) \Bigr). 
\end{align*}
Here we define a map $g:P(\alpha,\nu ,n) \rightarrow \mathbb{Z}P(\alpha,\nu ,n)$ as follows.
We fix a nanophrase $p''$ and then for any $s$ in $P(\alpha,\nu ,n)$ such that $p'' \triangleleft s$,
\[g_{p''}(s) = \sum_{(s\setminus_s p'') \triangleleft u \triangleleft s} u.  \]
We extend $g$ linearly to all of $\mathbb{Z}P(\alpha,\nu ,n)$ and denote it by $g$ again.
Thus we have
\begin{align}
\sum_{p''\triangleleft q \triangleleft p'} (-1)^{\delta(p',q)} \bigl( \sum_{p''\triangleleft s \triangleleft q} (\sum_{(s\setminus_s p'')\triangleleft u \triangleleft s} u) \bigr)  
&=\Bigl( \sum_{p''\triangleleft q \triangleleft p'} (-1)^{\delta(p',q)} \bigl( \sum_{p''\triangleleft s \triangleleft q} g_{p''}(s) \Bigr). \label{f.thm1}
\end{align} 
Since $g$ is linear, lemma \ref{lem2-2} implies that 
\begin{align*}
\text{the right hand side of (\ref{f.thm1})} &=g_{p''} \Bigl( \sum_{p''\triangleleft q \triangleleft p'} (-1)^{\delta(p',q)} \bigl( \sum_{p''\triangleleft s \triangleleft q}s \bigr) \Bigr)\\
&= g_{p''}(p') \\
&= \sum_{(p'\setminus_{p'} p'')\triangleleft u \triangleleft p'}u.
\end{align*} 
Since $p'\setminus_{p'} p''$ is the subphrase of $p'$ which has rank $k$ ($>m$), each phrase of the above term has rank more than $m$. 
Thus the image of this term under $\hat{O}_m$ is zero.

Secondly, we prove that $\Gamma _{m,n}$ is a finite type invariant of degree $m$.
For any $n$-component nanophrase $p$ without singular letters, 
let $p^*$ be the phrase obtained from $p$ by replacing all letters with the corresponding singular letters.
We then have
\begin{align*}
\hat{\Gamma} _{m,n}(p^*) 
&= \hat{O}_m (\sum_{p \triangleleft s \triangleleft p}s) \\
&= \hat{O}_m(p) \\
&= p.
\end{align*}
Therefore $\hat{\Gamma} _{m,n}(p^*)$ is not equal to zero, and so the degree of $\Gamma _{m,n}$ is $m$.
 
Finally, we prove the universality as follows.
For any finite type invariant $v$ of degree less than or equal to $m$ taking values in $H$, 
we will show that there exists a homomorphism $f$ from $G_m(\alpha ,\nu ,n)$ to $H$ such $v=f\circ \Gamma _{m,n}$.  
We extend $v$ linearly to all of $\mathbb{Z}P(\alpha, \nu, n)$ and denote it by $v$ again.
We define a map $f$ as follows.
For any $n$-component nanophrase $p$,
\[ f(p) = v\circ \hat{I}\circ \hat{\theta } _n^{-1}(q), \] 
where $q$ is in $G_m(\alpha ,\nu ,n)$ such that $\hat{O}_m(q)=p$.
Note that $q$ is the sum of phrases.
Then $f$ is well defined.
In fact, for any $q$ and $q'$ such that 
\[ \hat{O}_m(q) =\hat{O}_m(q') =p,\] 
$\hat{O}_m(q-q')$ is equal to zero in $G_m(\alpha ,\nu ,n)$ and so $q-q'$ consists of phrases with rank more than $m$.
We then have 
\begin{align*}
\hat{v}(q^*) 
&= v \Bigl( \sum_{r \triangleleft q} (-1)^{\delta{(q,r)}}r \Bigr) \\
&= v\circ I \circ \hat{\theta}_n^{-1} (q).  
\end{align*}
By the above and the fact that $v$ is a finite type invariant of degree less than or equal to $m$, 
\begin{align*}
v\circ I \circ \hat{\theta}_n^{-1}(q-q') &= \hat{v}(q^*-q'^*) \\
&=0,
\end{align*}
where $q^*$ is the sum of phrases obtained from $q$ by replacing  all letters of each phrase of $q$ with the corresponding singular letter, and similarly $q'^*$.
Therefore $f$ is well defined.
We then obtain
\begin{align*}
v &= f \circ \hat{O}_m\circ \hat{\theta } _n \circ  \hat{I}^{-1} \\
&= f \circ \Gamma _{m,n}. 
\end{align*}
Therefore we conclude the universality.  
\end{proof}

For any nanophrase $w$, we denote by $[w]$ the sum 
of the all nanophrase which is cyclic equivalent to $w$.
For example, we consider the case when $\alpha=\{ \pm \}$ and $\nu$ sends $+$ to $-$. 
If $w = XX\bar{Y}\bar{Y}$ where $X$ and $\bar{Y}$ mean a letter $X$ with $|X|=+$ and $Y$ with $|Y|=-$ respectively, then 
$[w] = XX\bar{Y}\bar{Y} + YXXY + \bar{Y}\bar{Y}XX + \bar{X}\bar{Y}\bar{Y}\bar{X}$.
If $v = XXYY$, then
$[w] = XXYY + \bar{Y}XX\bar{Y}$.

For two arbitrary nanophrases $w$ and $v$, we define $\langle ,\rangle$ by
\[ \langle w, v \rangle = \#\{ {\rm subwords\hspace{.2em}of \hspace{.2em}}v \hspace{.2em}
{\rm isomorphic \hspace{.2em}to} \hspace{.2em} w \}. \]

We extend $\langle ,\rangle$ bilinearly, so that it is a map 
$\mathbb{Z}P(\alpha,\nu ,n) \times \mathbb{Z}P(\alpha,\nu ,n) \rightarrow \mathbb{Z}$.

\begin{lemma} \label{f.t.i}
For any $w$ in $P(\alpha ,n)$, $\langle [w], ~~\rangle : P(\alpha,\nu ,n) \rightarrow \mathbb{Z}$ is a finite type invariant of degree rank$(w)$.
\end{lemma}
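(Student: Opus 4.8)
The plan is to first recast the pairing combinatorially. Because $[w]=\sum_{w'\sim w} w'$ is a finite formal sum ranging over the finitely many isomorphism classes of nanophrases cyclically equivalent to $w$, bilinearity of $\langle\cdot,\cdot\rangle$ shows that for a genuine nanophrase $v$
\[\langle [w], v\rangle=\#\{\,r\triangleleft v : r\sim w\,\},\]
the number of subphrases of $v$ lying in the cyclic class of $w$. I would first verify that this is a well-defined map on $P(\alpha,\nu,n)$, i.e. that it is unchanged under isomorphisms (immediate) and under a $\nu$-shift move $v\rightsquigarrow v'$. For the latter I would exhibit the bijection between subphrases of $v$ and of $v'$ that keeps the same set of letters. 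If the shifted letter $A$ (the first letter of the affected component) is deleted, the two subphrases coincide; if $A$ is retained, then passing from $v$ to $v'$ moves $A$ from the front to the back of that component, which is precisely a $\nu$-shift move on the subphrase, and one checks that the sign rule ($|A|$ is replaced by $\nu(|A|)$ exactly when the rest of the component still contains $A$) is inherited verbatim by the subphrase. Hence corresponding subphrases are cyclically equivalent, so the count is preserved. I expect this bookkeeping with signs to be the main obstacle, since it is the only place the structure of the shift move is genuinely used.

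Next I would bound the degree by $\mathrm{rank}(w)$. Writing $u=\langle[w],\cdot\rangle$ and applying Proposition \ref{f.pro0} to a singular phrase $p$ with $p'$, $p''$ as there, I would substitute the combinatorial description of $u$ and interchange the order of summation:
\begin{align*}
\hat u(p)&=\sum_{p''\triangleleft q\triangleleft p'}(-1)^{\delta(p',q)}\langle[w],q\rangle\\
&=\sum_{\substack{r\triangleleft p'\\ r\sim w}}\ \sum_{(p''\cup_{p'}r)\triangleleft q\triangleleft p'}(-1)^{\delta(p',q)}.
\end{align*}
The inner sum is a signed count of the subphrases $q$ lying between $p''\cup_{p'}r$ and $p'$, which by the binomial identity $\sum_{i}\binom{k}{i}(-1)^{k-i}=0$ (for $k\ge 1$) already used in Proposition \ref{f.pro1} vanishes unless $p''\cup_{p'}r=p'$, in which case it equals $1$. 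Thus $\hat u(p)$ counts the subphrases $r\triangleleft p'$ with $r\sim w$ and $p''\cup_{p'}r=p'$. Since cyclic equivalence preserves rank, any such $r$ has $\mathrm{rank}(r)=\mathrm{rank}(w)$; but the condition $p''\cup_{p'}r=p'$ forces $r$ to contain every letter of $p'\setminus_{p'}p''$, i.e. all the de-starred singular letters, so $\mathrm{rank}(w)=\mathrm{rank}(r)\ge(\text{number of singular letters of }p)$. Consequently, if $p$ has more than $\mathrm{rank}(w)$ singular letters there is no admissible $r$ and $\hat u(p)=0$, proving the degree is at most $\mathrm{rank}(w)$.

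Finally I would show the degree is exactly $\mathrm{rank}(w)$ by evaluating on the fully singular phrase $w^*$, which has $\mathrm{rank}(w)$ singular letters. Here $p'=w$ and $p''=\emptyset_n$, so $p''\cup_{p'}r=r$ and the formula above reduces to $\hat u(w^*)=\#\{r\triangleleft w : r\sim w,\ r=w\}=1\neq 0$. Therefore $\langle[w],\cdot\rangle$ is a finite type invariant of degree precisely $\mathrm{rank}(w)$. The only routine verifications left are the elementary binomial computation (identical to the one in Proposition \ref{f.pro1}) and the finiteness of the cyclic class of $w$, which guarantees that $[w]$ is a legitimate finite sum.
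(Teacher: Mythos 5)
Your proof is correct and follows essentially the same route as the paper: apply Proposition \ref{f.pro0}, interchange the order of summation over subphrases $r\sim w$, kill the inner signed sum by the binomial identity unless $r$ absorbs all the de-starred singular letters, and evaluate on $w^*$ to see the degree is exactly $\mathrm{rank}(w)$. The only difference is that you spell out the invariance under $\nu$-shift moves (the bijection of subphrases and the inherited sign rule), which the paper dismisses as easy; that check is a worthwhile addition but not a departure in method.
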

\begin{proof}
It is easy to check that it is a homomorphism.
Suppose that $w$ is a nanophrase whose rank is $m$.
Let $p$ be an $n$-component nanophrase with rank($p$) $>m$.
Let $u(\cdot ) = \langle [w], \cdot  \rangle$.
By proposition \ref{f.pro0}, 
\begin{align}
\hat{u}(p)= \sum_{p''\triangleleft q \triangleleft p'} (-1)^{\delta{(p',q)}}u(q),  \label{sum}
\end{align} 
where $p'$ is the non-singular phrase obtained by replacing all singular letters of $p$ by the corresponding non-singular letters,  
and $p''$ is the subphrase of $p$ obtained from $p$ by deleting all singular letters.
If there is a subphrase $r$ of $p'$ whose rank is $m$, then let $k$ be the rank of the intersection of $r$ and $p''$.
Note that $k$ is at least 0 and at most $m$.
The number of $q$'s satisfying $r \triangleleft q$ and $p'' \triangleleft q \triangleleft p'$ is
\[ \sum_{s=0}^{k+1}  \binom{k+1}{s} (-1)^{s} = 0. \]
So $\langle [w], p \rangle=0$.
Since $\langle [w], w^* \rangle = \langle [w], w \rangle=1$, It is a finite type invariant of degree $m$.

\end{proof}

Let $\varphi_{m,n}$ denote a map
\[ P(\alpha, \nu, n) \rightarrow 
\bigoplus_{v \in P(\alpha,\nu ,n),  {\rm rank}(v) \leq m} \mathbb{Z}\langle v \rangle, \] 
defined by the following.
$\varphi_{m,n}(w)$ is a direct sum of $\langle [v], w\rangle$
for $v$ in $P(\alpha, \nu ,n)$ whose rank is less than or equal to $m$, where $\mathbb{Z}\langle v \rangle$ is an infinite cyclic group generated by $v$.
We extend $\varphi_{m,n}$ linearly to all of $\mathbb{Z}P(\alpha, \nu ,n)$.

\begin{proposition}
There exists an isomorphism $f$ from $\bigoplus_{v \in P(\alpha,\nu ,n),  {\rm rank}(v) \leq m} {\mathbb{Z}\langle v \rangle}$ to $G_m(\alpha,\nu,n)$ such that $f \circ \varphi_{m,n} = \Gamma _{m,n}$.
\end{proposition}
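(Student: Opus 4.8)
The plan is to exhibit $f$ directly as the obvious identification of bases and then to verify the commuting relation by unwinding the definition of $\Gamma_{m,n}$. First I would observe that both groups are free abelian on the same index set. Since $\hat{I}$ identifies $G(\alpha,\nu,n)$ with the free abelian group $\mathbb{Z}P(\alpha,\nu,n)$, and $G_m(\alpha,\nu,n)$ is the quotient of $G(\alpha,\nu,n)$ by the subgroup generated by the cyclic classes of rank $>m$, the group $G_m(\alpha,\nu,n)$ is free abelian on the cyclic classes $v$ with $\mathrm{rank}(v)\le m$. The target $\bigoplus_{\mathrm{rank}(v)\le m}\mathbb{Z}\langle v\rangle$ is free abelian on the same finite set by construction. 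Hence the assignment $f(\langle v\rangle)=v$, where $v$ on the right denotes the class of $v$ in $G_m(\alpha,\nu,n)$, extends to a well-defined isomorphism, and it remains only to verify $f\circ\varphi_{m,n}=\Gamma_{m,n}$. As both sides are homomorphisms out of $\mathbb{Z}P(\alpha,\nu,n)$, this reduces to checking the identity on an arbitrary generator $w\in P(\alpha,\nu,n)$.

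The key step is the computation of $\Gamma_{m,n}(w)$ in terms of the pairing. Choosing a representative nanophrase $p$ of $w$, I would use that $\hat{I}^{-1}(w)$ is the class of $p$ in $G(\alpha,\nu,n)$, that $\hat{\theta}_n$ carries it to the class of $\theta_n(p)=\sum_{q\triangleleft p}q$, and that $\hat{O}_m$ then kills the terms of rank $>m$. Grouping the subphrases $q\triangleleft p$ by their cyclic equivalence class yields
\[ \Gamma_{m,n}(w)=\sum_{\mathrm{rank}(v)\le m}\#\{\,q\triangleleft p : q\sim v\,\}\,v. \]
It then suffices to identify $\#\{q\triangleleft p:q\sim v\}$ with $\langle[v],w\rangle$. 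Since $[v]=\sum_{r\sim v}r$ is the sum over the distinct isomorphism classes in the cyclic orbit of $v$, and the sets of subphrases of $p$ isomorphic to distinct $r$ are disjoint, I get $\langle[v],p\rangle=\sum_{r\sim v}\langle r,p\rangle=\#\{q\triangleleft p:q\sim v\}$; by Lemma \ref{f.t.i} this number depends only on the cyclic class $w$. Comparing with $\varphi_{m,n}(w)=\sum_{\mathrm{rank}(v)\le m}\langle[v],w\rangle\langle v\rangle$ and applying $f$ gives $f(\varphi_{m,n}(w))=\Gamma_{m,n}(w)$, as desired.

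I expect the main obstacle to be the bookkeeping in the identity $\langle[v],p\rangle=\#\{q\triangleleft p:q\sim v\}$: one must check that summing $\langle r,p\rangle$ over the isomorphism classes $r$ appearing in a single cyclic orbit counts each relevant subphrase exactly once, which uses both the disjointness of the isomorphism types and the fact that $[v]$ lists the orbit of $v$ without repetition, and one must confirm that this count is genuinely invariant under cyclic equivalence of $w$, precisely the content of Lemma \ref{f.t.i}. As an alternative route, one could instead feed the finite type invariant $\varphi_{m,n}$ of degree $\le m$ into the universality of $\Gamma_{m,n}$ provided by Theorem \ref{f.thm}, obtaining a homomorphism $g$ with $\varphi_{m,n}=g\circ\Gamma_{m,n}$, and then prove $g$ invertible by the same computation on generators and set $f=g^{-1}$. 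The direct approach above seems preferable, since it produces $f$ together with its inverse in one stroke.
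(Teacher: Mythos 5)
Your proof is correct. The paper itself omits the proof of this proposition (stating only that it is easy to check), and your argument --- identifying $G_m(\alpha,\nu,n)$ as the free abelian group on the cyclic equivalence classes of rank at most $m$, taking $f$ to be the tautological identification of bases, and matching the coefficient of $v$ in $\Gamma_{m,n}(w)=\hat{O}_m\circ\hat{\theta}_n\circ\hat{I}^{-1}(w)$ with $\langle [v],w\rangle$ by grouping the subphrases of a representative $p$ according to their cyclic class --- is exactly the verification the author left to the reader, including the two points that genuinely need checking (that $[v]$ enumerates the isomorphism classes in the orbit without repetition, and that the count is a cyclic invariant, which is Lemma \ref{f.t.i}).
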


\begin{proof}
Since it is easy to check this proposition, we omit the proof.
\end{proof}

\begin{remark}
In \cite{label8048}, Fujiwara showed that when $\alpha=\{+, -\}$, this finite type invariant is a complete invariant for cyclic equivalence classes of signed words, so a complete invariant for stably homeomorphism classes of curves on closed oriented surfaces.  
\end{remark}

\begin{remark}
We compare $\Gamma _{m,1}$ with Ito's $SCI_m$.
Let $c$ be a generic immersed curve with $n$ crossings, and $w_c$ a signed word corresponding to $c$.
In \cite{MR2573961} Ito defined a map $SCI_m(c): W_m \rightarrow k$ where $k$ is a field and $W_m$ is a $k$-linear space generated by the isomorphism classes of signed words with rank $m$. 
$SCI_m(c)$ has an information of all subwords of $w_c$ with rank $m$.    
Thus when $\alpha$ is $\{\pm\}$, $\nu$ a map from $+$ to $-$, 
that is signed words, for any signed words $w_1$ and $w_2$ corresponding to generic immersed curves $c_1$ and $c_2$ respectively, $\Gamma _{m,1}(w_1) = \Gamma _{m,1}(w_2)$ if and only if $SCI_i(c_1) = SCI_i(c_2)$ for any $i$ between 0 and $m$.

\end{remark}

\section{Example}
Here is an example of finite type invariants.

\begin{example}
The rank of a nanophrase over $\alpha$ is a finite type invariant of degree $1$.
\end{example}    

\begin{example}
Let $\pi _{ij}$ ($1 \leq i <j \leq n$) be a free abelian group generated by elements in $\alpha $.
Let $\pi _{ii}$ ($1 \leq i \leq n$) be a free abelian group generated by elements in $\alpha $ with the relations, $a = \nu (a)$, for all $a$ in $\alpha $.

For an $n$-component nanophrase $p$, the {\it linking matrix} of $p$ is defined as follows. 
Let $\mathcal{A}_{ij} (p)$ ($1 \leq i <j \leq n$) be the set of letters which have one occurrence in the $i$th component of $p$ and the other occurrence in the $j$th component of $p$. 
Let $\mathcal{A}_{ii} (p)$ ($1 \leq i \leq n$) be the set of letters which have two occurrences in the $i$th component of $p$. 
For any $i$ and $j$, let $l_{ij}(p)$ be defined as follows.
\[l_{ij}(p) =\sum _{A\in \mathcal{A}_{ij} (p)} |A| \in \pi _{ij}. \]
Let the linking matrix $L(p)$ be the symmetric $n\times n$ matrix given by the entry $l_{ij}(p)$.
It is easy to see that the linking matrix of a nanophrase is an invariant for cyclic equivalence classes of nanophrases. 
\end{example}  

For any $p$ in $P(\alpha, \nu ,n)$, we define a map $\iota:P(\alpha, \nu ,n) \rightarrow \mathbb{Z}$ by $\iota(p)=1$ and extend the map linearly.

\begin{theorem}
The linking matrix is a finite type invariant of degree $1$, 
and a direct sum of the linking matrix and $\iota$ is the universal finite type invariant of degree $1$.
\end{theorem}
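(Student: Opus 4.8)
The plan is to deduce both assertions from Theorem \ref{f.thm} together with the immediately preceding proposition, which identifies the universal degree-$1$ invariant $\Gamma_{1,n}$ with the map $\varphi_{1,n}$ into $\bigoplus_{\mathrm{rank}(v)\le 1}\mathbb{Z}\langle v\rangle$. The first step is to split this target abelian group into its rank-$0$ and rank-$1$ summands: the unique rank-$0$ cyclic class is $[\emptyset_n]$, contributing one copy of $\mathbb{Z}$, and the remaining generators are the cyclic classes of rank-$1$ nanophrases. The whole argument then amounts to matching the rank-$0$ summand with $\iota$ and the rank-$1$ summand with the linking matrix $L$.

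For the finite-type claim, I would express each entry of $L$ through the rank-$1$ counting invariants of Lemma \ref{f.t.i}. Concretely, fix $a\in\alpha$; for $i<j$ let $v^{a}_{ij}$ be the rank-$1$ nanophrase having a single letter of projection $a$ with one occurrence in component $i$ and the other in component $j$, and for the diagonal let $v^{a}_{ii}$ be the nanophrase whose $i$th component is the doubled letter of projection $a$. Deleting all but one letter shows that the coefficient of $a$ in $l_{ij}(p)$ equals $\langle [v^{a}_{ij}],p\rangle$ for $i<j$, while the coefficient of the class of $a$ in $l_{ii}(p)$ equals $\langle [v^{a}_{ii}],p\rangle$, since a $\nu$-shift on the doubled component sends $a$ to $\nu(a)$ and this is exactly the relation imposed on $\pi_{ii}$. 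By Lemma \ref{f.t.i} each of these is a finite type invariant of degree $1$, so $L$, being a componentwise combination of them, has degree at most $1$. It is not of degree $0$: applying the extension $\hat L$ to the one-singular-letter phrase $A^{\ast}A^{\ast}$ (both occurrences in one component) gives $L(AA)-L(\emptyset_n)$, a matrix with a single nonzero diagonal entry, so the degree is exactly $1$.

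For universality, I would promote the matching above to an isomorphism $g$ from $\bigoplus_{\mathrm{rank}(v)\le 1}\mathbb{Z}\langle v\rangle$ onto $\bigl(\bigoplus_{i\le j}\pi_{ij}\bigr)\oplus\mathbb{Z}$ satisfying $g\circ\varphi_{1,n}=L\oplus\iota$. The key is that the rank-$1$ cyclic classes are in bijection with the generators of the groups $\pi_{ij}$: for $i<j$ each component of $v^{a}_{ij}$ is a single letter, so shift moves are trivial and the classes $v^{a}_{ij}$ are pairwise distinct as $a$ ranges over $\alpha$, matching $\pi_{ij}$ being free on $\alpha$; on the diagonal the shift identifies $v^{a}_{ii}$ with $v^{\nu(a)}_{ii}$, matching the relation $a=\nu(a)$ in $\pi_{ii}$. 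Sending $[\emptyset_n]$ to the generator of the $\mathbb{Z}$ factor and each rank-$1$ class to the corresponding generator gives the desired isomorphism, and commutativity is precisely the counting identity of the previous paragraph together with $\langle[\emptyset_n],p\rangle=1=\iota(p)$. Composing with the preceding proposition then shows $L\oplus\iota$ is isomorphic to $\Gamma_{1,n}$ and hence universal of degree $1$.

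The main obstacle is the bookkeeping in this last bijection: one must check carefully that every rank-$1$ cyclic class arises as exactly one $v^{a}_{ij}$ (or $v^{a}_{ii}$), that no two distinct generators of the $\pi_{ij}$ are identified, and that the diagonal identification $a\sim\nu(a)$ is the only cyclic relation among rank-$1$ phrases. Once this correspondence is pinned down, the commutativity $g\circ\varphi_{1,n}=L\oplus\iota$ is a direct comparison of the two counting descriptions and presents no further difficulty.
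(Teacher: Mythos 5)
Your proposal is correct, and both halves go through, but the first half takes a genuinely different route from the paper while the second is essentially the paper's argument in disguise. For the finite type claim the paper argues directly: it writes $\hat L(p_{A^\ast B^\ast})=L(p_{AB})-L(p_A)-L(p_B)+L(p)$ for a phrase with two singular letters and kills each entry $l_{ij}$ by a case analysis on how the occurrences of $A$ and $B$ are distributed among the components; you instead observe that the coefficient of $a$ in $l_{ij}(p)$ is exactly the counting invariant $\langle [v^a_{ij}],p\rangle$ of a rank-$1$ phrase (with the diagonal relation $a\sim\nu(a)$ accounting for the shift move on a doubled component), and then invoke Lemma \ref{f.t.i}. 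Your reduction is cleaner and makes the invariance of $L$ under cyclic equivalence automatic, at the cost of leaning on Lemma \ref{f.t.i}; the paper's computation is self-contained and more elementary. For universality the paper identifies $G_1(\alpha,\nu,n)$ with $\bigoplus_{i<j}\pi_{ij}\oplus\bigoplus_i\pi_{ii}\oplus\mathbb{Z}$ by listing the rank-$\le 1$ generators $g_{ija}$, $\emptyset_n$ and the shift relations $g_{iia}=g_{ii\nu(a)}$, and concludes $f\circ(L\oplus\iota)=\Gamma_{1,n}$; you do the same enumeration on the side of $\bigoplus_{\mathrm{rank}(v)\le 1}\mathbb{Z}\langle v\rangle$ and route through the proposition $f\circ\varphi_{m,n}=\Gamma_{m,n}$, which is the same content transported across that proposition's isomorphism. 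The bookkeeping you flag as the main obstacle is indeed the only real work, and it is exactly the generators-and-relations count the paper carries out; your description of it (single-letter components admit no nontrivial shift, doubled components pick up $a\sim\nu(a)$, and $\langle[\emptyset_n],\cdot\rangle=\iota$) is complete and correct.
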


\begin{proof}
We first prove that the linking matrix $L(p)$ is a finite type invariant of degree $1$.
Let $p_{A^*B^*}$ be a nanophrase with two singular letters, $A^*$ and $B^*$.
Let $p_{AB}$ be the nanophrase obtained from $p$ by replacing $A^*$ and $B^*$ with $A$ and $B$, respectively. 
Let $p_{A}$ be the nanophrase obtained from $p$ by replacing $A^*$ with $A$ and deleting $B^*$.
Let $p_{B}$ be the nanophrase obtained from $p$ by replacing $B^*$ with $B$ and deleting $A^*$.
Let $p$ be the nanophrase obtained from $p$ by deleting $A^*$ and $B^*$.
We then can obtain that
\begin{align*}
L(p_{A^*B^*}) &= L(p_{AB}) - L(p_{A}) - L(p_{B}) + L(p).
\end{align*}
We show that $L(p_{A^*B^*})$ is zero.
It is sufficient to prove that $l_{ij}(p_{A^*B^*})$ is zero.  
We first consider the case that $i = j$.
Let the letters $A$ and $B$ appear twice in $i$th component.
We then obtain that 
\begin{align*}
l_{ii}(p_{AB}) &= l_{ii}(p) + |A| + |B| \\
l_{ii}(p_{A}) &= l_{ii}(p) + |A| \\
l_{ii}(p_{B}) &= l_{ii}(p) + |B|.
\end{align*}
Therefore $l_{ii}(p_{A^*B^*})$ is zero. 
Let the letter $A$ appear twice in $i$th component and the letter $B$ do not appear twice in $i$th component.
We then obtain that 
\begin{align*}
l_{ii}(p_{AB}) &= l_{ii}(p_{A}) = l_{ii}(p) + |A| \\
l_{ii}(p_{B}) &= l_{ii}(p). 
\end{align*}
Hence $l_{ii}(p_{A^*B^*})$ is zero. 
Let both the letters $A$ and $B$ do not appear twice in $i$th component.
We then obtain that
\begin{align*}
l_{ii}(p_{AB}) = l_{ii}(p_{A}) = l_{ii}(p_{B}) = l_{ii}(p). 
\end{align*}
Therefore $l_{ij}(p_{A^*B^*})$ is zero. 
The same conclusion can be obtained for the case that $i \neq j$.
Let $q_*$ be a nanophrase with only one singular letter.
If $i$th and $j$th components contain the singular letter, then $l_{ij}(q_*) \neq 0$.
Thus $L$ is a finite type invariant of degree $1$.

The proof is completed by showing the universality.
Our proof starts with the observation that 
\[ G_1(\alpha ,\nu ,n) \cong  \bigoplus_{1 \leq i < j \leq n} \pi_{ij} \oplus \bigoplus_{1 \leq i  \leq n} \pi_{ii} \oplus \mathbb{Z}. \]
We note that a nanophrase with rank 1 is of the form $\emptyset|\ldots|\emptyset|A|\emptyset|\ldots|\emptyset|A|\emptyset|\ldots|\emptyset$  or $\emptyset|\ldots|\emptyset|AA|\emptyset|\ldots|\emptyset$.
Let us define $g_{ija}$ ($1 \leq i < j \leq n$) to be the nanophrase with the form $\emptyset|\ldots|\emptyset|A|\emptyset|\ldots|\emptyset|A|\emptyset|\ldots|\emptyset$ where $A$ appears in both $i$th and $j$th components and $|A|=a$ in $\alpha$,
and $g_{iia}$ ($1 \leq i \leq n$) to be the nanophrase with the form $\emptyset|\ldots|\emptyset|AA|\emptyset|\ldots|\emptyset$ where $A$ appears in $i$th component and $|A|=a$ in $\alpha$.
Then the generators of $G_1(\alpha ,\nu ,n)$ are $g_{ija}$ ($1 \leq i \leq j \leq n$) and $\emptyset _n$.
We consider the $\nu$-shift move.
Since the $\nu$-shift move sends $g_{iia}$ to $g_{ii \nu(a)}$ and $g_{ija}$ to itself,
 the relations of $G_1(\alpha ,\nu ,n)$ are $g_{iia} =g_{ii \nu(a)}$ for any $i$ and $a$.
Thus $G_1(\alpha ,\nu ,n)$ is isomorphic to the abelian group generated by $g_{ija}$ ($1 \leq i \leq j \leq n$) and $\emptyset_n$ with the relations $g_{iia} =g_{ii \nu(a)}$ for all $a$ in $\alpha$ and $1 \leq i \leq n$.
This abelian group is isomorphic to 
\[ \bigoplus_{1 \leq i < j \leq n} \pi_{ij} \oplus \bigoplus_{1 \leq i \leq n} \pi_{ii} \oplus \mathbb{Z}, \]
by a direct sum of maps from $a$ in $\pi_{ij}$ to $g_{ija}$ for any $i$ and $j$ and from 1 in $\mathbb{Z}$ to 1 in $\mathbb{Z}\langle \emptyset_n \rangle$,
and so $f \circ (L \oplus \iota) = \Gamma_{1,n}$.
\end{proof}

\begin{remark}
This linking matrix is different from Fukunaga's linking matrix in \cite{MR2845717} on diagonal components.
Fukunaga's linking matrix is a universal finite type invariant for homotopy classes of nanophrases, in \cite{MR2786675}.
But his linking matrix is not a universal finite type invariant for cyclic equivalence classes of nanophrases. 
\end{remark}

\section{Finite type invariants for spherical curves}

From now on, we work only on the set $P(\alpha, \nu, 1)$, 
where $\alpha = \{ \pm \}$ and $\nu$ is the map which sends $+$ to $-$,
that is signed words.

Let ${P(\alpha, 1)}_s$ be a subset of $P(\alpha, 1)$, each of which has the corresponding generic curve on a sphere.
Let ${P(\alpha,\nu, 1)}_s$ be the union of the set of cyclic equivalence classes of ${P(\alpha, 1)}_s$ and $P(\alpha, 1) \setminus  {P(\alpha, 1)}_s$.
Note that according to our definition of finite type invariants, when we extend an invariant $u$ to $\hat{u}$, we must extend the domain to ${P(\alpha,\nu, 1)}_s$.
Therefore we consider not the set of cyclic equivalence classes of ${P(\alpha, 1)}_s$ but ${P(\alpha,\nu, 1)}_s$.

In \cite{MR2726565}, Ito introduced regular homotopy moves on signed words. 
We review these definition.  

\begin{definition}[Regular homotopy moves on signed words]
Three regular homotopy moves on signed words are defined as follows. 

A first regular homotopy is $(\mathcal{A}, xyz) \to (\mathcal{A} \cup \{ A, B \}, xA^\pm B^\mp yA^\pm B^\mp z)$, 
where $x$, $y$ and $z$ are words without letters $A$ and $B$. 

A second regular homotopy is $(\mathcal{A}, xyz) \to (\mathcal{A} \cup \{ A, B \}, xB^\pm A^\mp yB^\pm A^\mp z)$, 
where $x$ and $y$ are words without letters $A$ and $B$. 

A third regular homotopy is $(\mathcal{A}, xA^\pm B^\pm yA^\pm C^\pm zB^\pm C^\pm t) \to (\mathcal{A}, xB^\pm A^\pm yC^\pm A^\pm zC^\pm B^\pm t)$, 
where $x$, $y$, $z$ and $t$ are words without letters $A$, $B$ and $C$.
\end{definition}

These moves are corresponding to moves of curves illustrated in Figure \ref{fig:move}.
\begin{figure}[h]
\begin{center}
\includegraphics[scale=0.9]{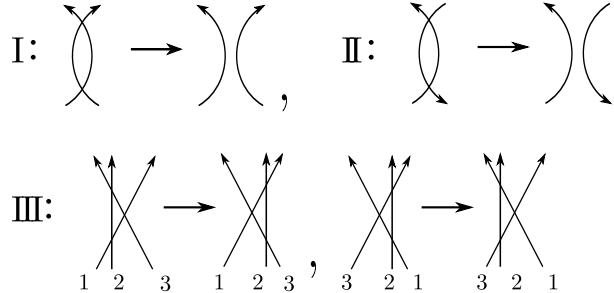}
\caption{Regular homotopy moves}
\label{fig:move}
\end{center}
\end{figure}

Arnold's basic invariants $J_s^+$, $J_s^-$ and $St_s$ are defined by their behavior under the moves and the inverse moves in Figure \ref{fig:move}.
In \cite{label8048}, Fujiwara extended Arnold's basic invariants of spherical curves to invariants on signed words, in \cite{MR1650406}. 
We review these definitions.  

\begin{definition}[Arnold's basic invariants on signed words]
$J_s^+$ decreases (respectively increases) by 2 under the first regular homotopy move (respectively inverse move), 
and does not change under the other moves.

$J_s^-$ increases (respectively decreases) by 2 under the second regular homotopy move (respectively inverse move), 
and does not change under the other moves.

$St_s$ increases (respectively decreases) by 1 under the third regular homotopy move (respectively inverse move), 
and does not change under the other moves.

Normalized conditions for Arnold's basic invariants are given as follows.
Let $w_i$  be a signed word $A_1A_1A_2A_2...A_iA_i$
 (or $\bar{A_1}\bar{A_1}\bar{A_2}\bar{A_2}...\bar{A_i}\bar{A_i}$).  
Then
\begin{align*}
J_s^+(w_i) = \frac{(i-1)^2}{2}, ~~
J_s^-(w_i) = \frac{(i-2)^2}{2} - \frac{3}{2}, ~~
St_s(w_i) = -\frac{(i-1)^2}{4}.
\end{align*}
\end{definition}

By using Polyak's formulation of the Arnold's basic invariants \cite{MR1650406},
we extend the Arnold's basic invariants to maps ${P(\alpha,\nu, 1)}_s \longrightarrow \mathbb{Z} $ defined by 
\begin{align*}
J_s^+(w) &= \langle \langle AABB - ABBA - 3ABAB, w \rangle \rangle  - \frac{1}{2} \langle [AA], w \rangle + \frac{1}{2} \\
J_s^-(w) &= \langle \langle AABB - ABBA - 3ABAB, w \rangle \rangle  - \frac{3}{2} \langle [AA], w \rangle + \frac{1}{2} \\
St(w) &= \frac{1}{2} \langle \langle - AABB + ABBA + ABAB, w \rangle \rangle  + \frac{1}{4} \langle [AA], w \rangle - \frac{1}{4},
\end{align*}
where a map $\langle \langle, \rangle \rangle$ is defined as follows.
For any Gauss word $v$ and signed word $w$,
we define $\langle \langle, \rangle \rangle$ by
\[ \langle \langle v, w \rangle \rangle = \sum_{w' \triangleleft w, w'\cong v} \prod _{A \text{ in } w', |A|=-1}(-1) \]
where $\cong$ means isomorphic as Gauss words.
We extend the map bilinearly.

\begin{theorem}
Arnold's basic invariants are finite type invariants of degree $2$.
But they do not provide the universal finite type invariant of degree $2$.
\end{theorem}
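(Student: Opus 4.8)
The plan is to prove the two assertions separately: first that $J_s^+$, $J_s^-$ and $St$ are finite type invariants of degree exactly $2$, and then that the triple $(J_s^+, J_s^-, St)$ is not a universal invariant of degree $2$.

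For the degree bound I would decompose each Arnold invariant into the three ingredients of its defining formula: a rank-$2$ part $\langle\langle v,\cdot\rangle\rangle$ with $v\in\{AABB, ABBA, ABAB\}$, the rank-$1$ term $\langle [AA],\cdot\rangle$, and an additive constant. By Lemma~\ref{f.t.i} the term $\langle [AA],\cdot\rangle$ has degree $1$, and a constant has degree $0$; since the extension $\widehat{\cdot}$ and the formula of Proposition~\ref{f.pro0} are linear in the underlying map, it is enough to show that each $\langle\langle v,\cdot\rangle\rangle$ with $\mathrm{rank}(v)=2$ contributes degree at most $2$. To see this I would expand $\widehat{J_s^+}(p)$ by Proposition~\ref{f.pro0} and linearity, using the concrete subwords $q\triangleleft p'$ of the fixed representative $p'$; the constant and $\langle[AA],\cdot\rangle$ parts already vanish on a phrase with $N>2$ singular letters, and for the remaining parts I write $\sum_{p''\triangleleft q\triangleleft p'}(-1)^{\delta(p',q)}\langle\langle v,q\rangle\rangle$, expand $\langle\langle v,q\rangle\rangle$ as a signed count over subwords $w'$ of $q$ isomorphic to $v$, and interchange the two summations. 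The weight $\prod_{A\in w',\,|A|=-1}(-1)$ depends only on $w'$, so it factors out of the inner sum over $q$, and what remains is exactly the alternating binomial sum of Lemma~\ref{f.t.i}, namely $0^{\,N-b}$ where $b\le\mathrm{rank}(v)=2$ is the number of singular-origin letters used by $w'$. For $N>2$ we have $N-b\ge 1$, so the inner sum is zero; combining linearly shows $J_s^+$, $J_s^-$, $St$ have degree at most $2$.

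To see the degree is exactly $2$ I would evaluate the extension on the fully singular word $(AABB)^*$ (with $|A|=|B|=+$). The degree-$1$ ingredient and the constant vanish under $\widehat{\cdot}$ on a word with two singular letters, and for a rank-$2$ pattern $v$ only the top subphrase contributes, so the surviving part equals $\langle\langle AABB-ABBA-3ABAB,\, AABB\rangle\rangle=\langle\langle AABB,AABB\rangle\rangle=1$, since $ABBA$ and $ABAB$ are not isomorphic to $AABB$ as Gauss words. Hence $\widehat{J_s^+}((AABB)^*)\neq 0$, so $J_s^+$ has degree exactly $2$, and the same computation handles $J_s^-$ and $St$.

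For non-universality I would argue by a rank count. By Theorem~\ref{f.thm} the map $\Gamma_{2,1}$ is the universal finite type invariant of degree $2$, and it is surjective onto $G_2(\alpha,\nu,1)$: for a class $w$ of rank at most $2$ one has $\Gamma_{2,1}(w)=w+(\text{strictly lower rank terms})$, so these values generate $G_2(\alpha,\nu,1)$ by a triangularity argument on rank. A direct inspection shows $G_2(\alpha,\nu,1)$ has free rank at least $4$, arising from $\emptyset$, the single rank-$1$ class $[AA]$, and the two distinct rank-$2$ classes $[AABB]$ and $[ABAB]$ (inequivalent because their chord diagrams have different crossing number, and not identified by the defining relations). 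On the other hand $J_s^+, J_s^-, St$ take values in a subgroup of $\tfrac14\mathbb{Z}^3$, of rank at most $3$. If $(J_s^+,J_s^-,St)$ were universal of degree $2$, then applying the defining property to the finite type invariant $\Gamma_{2,1}$ would yield a homomorphism $f$ with $\Gamma_{2,1}=f\circ(J_s^+,J_s^-,St)$, and surjectivity of $\Gamma_{2,1}$ would force $f$ onto, giving a surjection from a rank-$\le 3$ group onto a group of rank $\ge 4$ -- impossible.

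The main obstacle is this second half, specifically pinning down the rank of $G_2(\alpha,\nu,1)$ and the surjectivity of $\Gamma_{2,1}$; concretely one must check that $[AABB]$ and $[ABAB]$ are genuinely inequivalent and survive as independent generators under the relations defining $G(\alpha,\nu,1)$. A more hands-on alternative, which also explains \emph{why} universality fails, is to observe that on rank-$2$ words the degree-$2$ parts of $J_s^+, J_s^-, St$ depend only on the Gauss pattern of a subword and the parity of its negative letters, hence span only the two-dimensional space $\langle P-Q,\,R\rangle$, where $P,Q,R$ denote the signed counts $\langle\langle AABB,\cdot\rangle\rangle$, $\langle\langle ABBA,\cdot\rangle\rangle$, $\langle\langle ABAB,\cdot\rangle\rangle$; the degree-$2$ invariant $P+Q=\langle\langle AABB+ABBA,\cdot\rangle\rangle$ lies outside this span, and exhibiting two signed words with equal Arnold values but different $P+Q$ provides an explicit witness to non-universality.
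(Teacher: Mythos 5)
Your first half --- the degree bound and the exactness of the degree --- follows the paper's route: the paper's own proof literally says ``Similar to Lemma \ref{f.t.i}, we have finite type invariants of degree 2,'' and your expansion (interchanging the two sums, factoring the weight $\prod_{|A|=-1}(-1)$ out of the inner sum since it depends only on the subword $w'$, and reducing to the vanishing alternating binomial sum) is exactly the detail that remark suppresses; your evaluation on $(AABB)^*$ correctly certifies that the degree is not lower. The one point you pass over is the one the paper checks first, namely that the displayed Gauss-diagram formulas are actually invariants on ${P(\alpha,\nu,1)}_s$: individually $\langle\langle AABB,\cdot\rangle\rangle$ and $\langle\langle ABBA,\cdot\rangle\rangle$ are not $\nu$-shift invariant; only the combination $AABB-ABBA$ and, by Polyak's work, $ABAB$ are, and only on the spherical part of the domain.

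For non-universality the paper argues more concretely than you do: it exhibits the pair $AA\bar{B}\bar{B}CC$ and $\bar{A}\bar{A}\bar{B}\bar{B}CC$, computes that $J_s^+$, $J_s^-$ and $St$ all agree on them, and observes that the degree-$2$ invariant $\varphi_{2,1}$ separates them, so the triple cannot be universal. Your rank-counting argument is a legitimate alternative in principle, but as written it has a gap in this setting: the theorem sits in Section 7, where the relevant domain is (essentially) spherical signed words, and the class $[ABAB]$ is \emph{not} realized by any spherical curve (the Gauss word $ABAB$ fails the parity realizability condition), so the ``triangularity on rank'' surjectivity argument --- which needs each rank-$\le 2$ class to be realized by an element of the domain --- does not directly show that the image of $\Gamma_{2,1}$ on spherical words has rank at least $4$. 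The gap is repairable: replace the realizing word for the $[ABAB]$-direction by a spherical word containing $XYXY$-subwords, e.g.\ $ABCABC$, and verify that the four images of $\emptyset$, $AA$, $AABB$, $ABCABC$ are independent. Note also that your closing ``more hands-on alternative'' (two spherical words with equal Arnold values distinguished by some other degree-$2$ count) is in substance exactly the paper's proof, so if the rank count feels delicate you can simply adopt it.
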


\begin{proof}
It is easy to see that $\langle \langle AABB - ABBA, ~~ \rangle \rangle$ is an invariant for ${P(\alpha,\nu, 1)}_s$.
By Polyak \cite{MR1650406} we see that $\langle \langle ABAB, ~~\rangle \rangle$ is also an invariant for ${P(\alpha,\nu, 1)}_s$.
Thus $J_s^+$, $J_s^-$ and $St$ are invariants for ${P(\alpha,\nu, 1)}_s$.
Similar to lemma \ref{f.t.i}, we have finite type invariants of degree 2.  

We compare a signed word $AA\bar{B}\bar{B}CC$ with $\bar{A}\bar{A}\bar{B}\bar{B}CC$.

\begin{figure}[h]
\begin{center}
\includegraphics[scale=0.2, angle=0]{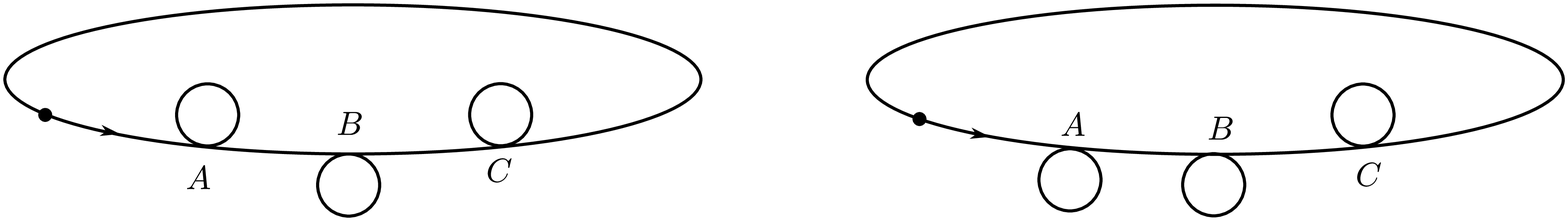}
\caption{}
\label{fig.e5}
\end{center}
\end{figure}\begin{align*}
J_s^+(AA\bar{B}\bar{B}CC) &= -2 = J_s^+(\bar{A}\bar{A}\bar{B}\bar{B}CC) \\
J_s^-(AA\bar{B}\bar{B}CC) &= -5 = J_s^-(\bar{A}\bar{A}\bar{B}\bar{B}CC) \\
St_s(AA\bar{B}\bar{B}CC) &= 1 =St_s(\bar{A}\bar{A}\bar{B}\bar{B}CC). 
\end{align*}
On the other hand, because $\varphi_{2,1}$ is an invariant for $P(\alpha, \nu, 1)$, $\varphi_{2,1}$ is also an invariant for ${P(\alpha, \nu, 1)}_s$, 
and we have
\begin{align*}
\varphi_{2,1}(AA\bar{B}\bar{B}CC) \neq \varphi_{2,1}(\bar{A}\bar{A}\bar{B}\bar{B}CC). 
\end{align*}
Therefore triple $J_s^+$, $J_s^-$ and $St$ do not provide the universal finite type invariant of degree 2.  

\end{proof}

\begin{remark}
Since $\langle [AABB], \rangle$, $\langle [AA\bar{B}\bar{B}], \rangle$, $\langle [\bar{A}\bar{A}\bar{B}\bar{B}], \rangle$, $\langle [ABAB], \rangle$, $\langle [AA], \rangle$, $\langle \emptyset, \rangle$ and $\langle \langle ABAB, \rangle \rangle$
are independent, the linear space generated by finite type invariants with degree at most 2 for spherical curves has dimension more than or equal to 7. 
\end{remark}

\section*{Acknowledgement}
The author thanks Professor Sadayoshi Kojima for his valuable
suggestions and comments.
The author would like to express her gratitude to Mikihiro Fujiwara.

\bibliography{kotorii}
\bibliographystyle{amsplain}
\end{document}